\providecommand{\tabularnewline}{\\}
\numberwithin{equation}{section}
\numberwithin{figure}{section}
\theoremstyle{plain}
\newtheorem{thm}{\protect\theoremname}
  \theoremstyle{definition}
  \newtheorem{defn}[thm]{\protect\definitionname}
  \theoremstyle{plain}
  \newtheorem{lem}[thm]{\protect\lemmaname}
  \theoremstyle{plain}
  \newtheorem{cor}[thm]{\protect\corollaryname}
  \theoremstyle{remark}
  \newtheorem{rem}[thm]{\protect\remarkname}
  \theoremstyle{plain}
  \newtheorem{prop}[thm]{\protect\propositionname}
  \theoremstyle{plain}
  \newtheorem{lyxalgorithm}[thm]{\protect\algorithmname}
  \theoremstyle{remark}
  \newtheorem*{acknowledgement*}{\protect\acknowledgementname}
  \theoremstyle{definition}
  \newtheorem{example}[thm]{\protect\examplename}
\def\cocoa{{\hbox{\rm C\kern-.13em o\kern-.07em C\kern-.13em o\kern-.15em A}}}
\newcommand{\N}{\mathbb{N}}
\newcommand{\T}{\mathbb{T}}
\newcommand{\Tnm}{\mathbb{T}^n_m}
\newcommand{\F}{\mathcal{F}}
\renewcommand{\S}{\mathcal{S}}
\DeclareMathOperator{\NS}{NS}
\DeclareMathOperator{\LT}{LT}
\DeclareMathOperator{\LC}{LC}
\DeclareMathOperator{\lcm}{lcm}
\DeclareMathOperator{\syz}{Syz}
\DeclareMathOperator{\psyz}{PSyz}
\DeclareMathOperator{\Spol}{Spol}
\newcommand\sigvar{e}
  \providecommand{\acknowledgementname}{Acknowledgement}
  \providecommand{\algorithmname}{Algorithm}
  \providecommand{\corollaryname}{Corollary}
  \providecommand{\definitionname}{Definition}
  \providecommand{\examplename}{Example}
  \providecommand{\lemmaname}{Lemma}
  \providecommand{\propositionname}{Proposition}
  \providecommand{\remarkname}{Remark}
\providecommand{\theoremname}{Theorem}
\begin{document}

\title{The F5 Criterion revised}

\author{Alberto Arri}

\curraddr{Google Corporation}

\address{Scuola Normale Superiore di Pisa - Piazza dei Cavalieri, 7 - 56126
Pisa, Italy}

\email{arri@sns.it}

\author{John Perry}

\address{University of Southern Mississippi, Hattiesburg, MS USA}

\email{john.perry@usm.edu}
\begin{abstract}
The purpose of this work is to generalize part of the theory behind
Faugère's ``F5'' algorithm. This is one of the fastest known algorithms
to compute a Gröbner basis of a polynomial ideal $I$ generated by
polynomials $f_{1},\ldots,f_{m}$. A major reason for this is what
Faug\`ere called the algorithm's ``new'' criterion, and we call
``the F5 criterion''; it provides a sufficient condition for a set
of polynomials $G$ to be a Gröbner basis. However, the F5 algorithm
is difficult to grasp, and there are unresolved questions regarding
its termination.

This paper introduces some new concepts that place the criterion in
a more general setting: $\S$-Gr\"obner bases and primitive $\S$-irreducible
polynomials. We use these to propose a new, simple algorithm based
on a revised F5 criterion. The new concepts also enable us to remove
various restrictions, such as proving termination without the requirement
that $f_{1},\ldots,f_{m}$ be a regular sequence.
\end{abstract}

\maketitle

\section{Introduction}

Since their introduction by Buchberger~\cite{GBBIB706}, Gröbner
bases and their computation have attracted significant attention in
the computer algebra community. The best-known algorithm used to compute
a Gröbner basis is the original algorithm due to Buchberger, and named
after him. Its efficiency has been constantly enhanced through the
years, but there remains room for improvement. Various criteria have
since been introduced to detect useless computations -- for example,~\cite{GBBIB706,Buchberger79,GBBIB1064}
--- but even so, the algorithm spends most of its time reducing polynomials
to zero (``zero reductions'').

Lazard~\cite{Lazard83} pointed out that one can view the computation
of a Gr\"obner basis as the reduction to row-echelon form of the
Macaulay matrix of the ideal. This led to the Staggered Linear Basis
algorithm of Gebauer and M\"oller~\cite{StaggeredLinearBases},
as well as the ``F4'' algorithm of Faug\`ere~\cite{Fau99}. M\"oller,
Mora, and Traverso exploited the relationship between zero reductions
and syzygies~\cite{143343}, but although the algorithm they presented
successfully detected many zero reductions, in practice it took too
much memory and time (see Section~8 of~\cite{143343}). Faugère~\cite{f5}
combined aspects of these approaches into algorithm ``F5'', which
for a certain class of polynomial system eliminates \emph{all} zero
reductions. This algorithm exhibits impressive performance.

By Faug\`ere's admission, the theory behind the algorithm's new criterion,
which we call \emph{the F5 criterion}, is merely sketched, so as to
leave more room for examples and an accurate description of the algorithm.
The proof of the algorithm's termination and correctness were likewise
only outlined. Additionally, some arguments were made under strong
assumptions, such as that the input sequence $f_{1},\ldots,f_{m}$
had only principal syzygies (such a sequence is called a \emph{regular}
sequence).

We pause a moment to consider some variants of F5. Bardet described
an implementation of F5 in matrix form, where termination is ensured
by manually supplying a maximal degree~\cite{mb}. Stegers filled
in some details of Faug\`ere's proof in~\cite{cryptoeprint:2006:404},
but stopped at two conjectures, one of which Gash later showed to
be false~\cite{Gash2008}.

The purpose of this paper is to present a simpler algorithm that illustrates
the fundamental principles of F5 without sacrificing termination.
We begin by defining a function $\S$ which is equivalent to that
of Faugère, then develop a structured theory, introducing new concepts
such as \emph{primitive $\S$-irreducible polynomials} and \emph{$\S$-Gröbner
bases}. These make the study of the problem more accessible, and suggest
a new version of the F5 criterion which depends neither on the regularity
of the input, nor on a particular ordering on the module of syzygies.

From this theory, we develop a new, simpler algorithm. We must emphasize
that the algorithm is a simple demonstration of the criterion, and
not a deep treatment of how to implement a highly efficient algorithm;
nevertheless, the new concepts allow us to prove correctness and termination
\emph{for any input}. Note that although some F5-style algorithms
provide explicit termination mechanisms \cite{mb,Gash2008}, these
mechanisms rely on previously-developed, non-F5 criteria to compute
a maximal degree explicitly; by contrast, the termination criterion
used here is precisely the generalized F5 criterion used to detect
useless computations. Later, we show that if we know that the input
is a regular sequence and we use a specific ordering on $\syz\F$,
we can avoid all the reductions to zero. We compare the results to
both F5 and the Staggered Linear Basis algorithm, showing how this
new algorithm differs from each.

The paper's structure is as follows. Sections~\ref{sec:Preliminaries}--\ref{sec:Properties-of-S-reductions}
cover background material; although most of this is relatively straightforward,
an important and novel contribution of the paper appears at the end
of Section~\ref{sec:Properties-of-S-reductions} with Proposition~\ref{prop_finiteSGb}.
The proof of that theorem leads to the concept of \emph{primitive}
$\S$-irreducible polynomials, from which we obtain in Section~\ref{sec:The main result}
a new characterization theorem for a Gr\"obner basis (Theorem~\ref{F5crit}).
In Section~\ref{sec:The-algorithm}, we use this characterization
to formulate the new algorithm, and we prove that it terminates correctly.
Section~\ref{sec:Comparison-with-previous} compares this algorithm
to the Staggered Linear Basis algorithm and F5, illustrating the differences
concretely. Section~\ref{sec:Conclusions-and-future} describes some
conclusions and possible future directions.

\section{\label{sec:Preliminaries}Preliminaries}


Let $P=k[x_{1},\ldots,x_{n}]$ be the polynomial ring over the field
$k$ with $n$ indeterminates, let $\mu$ be any admissible ordering
on $\T^{n}$, the monoid of power products over $x_{1},\ldots,x_{n}$:
$\T^{n}=\left\{ {\textstyle \prod_{i=1}^{n}x_{i}^{\alpha_{i}}\mid\alpha_{i}\in\N}\right\} $.

Let $P^{m}$ be the free $P$-module generated by $\{e_{1},\ldots,e_{m}\}$
and let $\mu'$ be any admissible\footnote{On what we mean by an ``admissible'' ordering, see the appendix.}
ordering on $\Tnm$, the set of module terms of $P^{m}$: $\Tnm=\{te_{\ell}\mid t\in\T,\ell\in\{1,\dots,m\}\}$.

Fix $\F=(f_{1},\ldots,f_{m})\in P^{m}$ and let $I\subseteq P$ be
the ideal generated by $\F$, and define $v:P^{m}\to I$ as the $P$-module
homomorphism such that $v(e_{i})=f_{i}$, and let $\syz\F=\ker v$,
so that $\syz\F$ is the module of syzygies of $\F$, $\LT(\syz\F)\subseteq\Tnm$
is set of leading module terms of $\syz\F$, and $\NS(\syz\F)=\Tnm\setminus\LT(\syz\F)$
is the \emph{normal set} of the syzygies of $\F$.

Clearly $v$ is surjective; therefore, as a $P$-module, $\nicefrac{P^{m}}{\syz\F}\simeq I$.
Let $\psi:I\to\nicefrac{P^{m}}{\syz\F}$ be the $P$-module isomorphism
between them. We use the notation $\LT(\cdot)$ for both the leading
term of a polynomial in $P$ with respect to $\mu$, and the module
leading term of a module element in $P^{m}$ with respect to $\mu'$.
We will use $\LC(f)$, where $f$ is a nonzero polynomial belonging
to $I$, to denote the coefficient of $\LT(f)$.

We are interested in finding a set of polynomials $G$ such that $G$
is a Gröbner basis for $I$ with respect to the ordering $\mu$ on
$\T^{n}$.
\begin{defn}
Let
\[
\begin{array}{lccl}
\S: & I\setminus\{0\} & \to & \NS(\syz\F)\\
 & f & \mapsto & \LT(\psi(f)),
\end{array}
\]
where $\LT(\psi(f))$ is the module leading term of the normal form
of $\psi(f)$ with respect to the ordering $\mu'$ on $P^{m}$. 
\end{defn}
The key idea of Faugère is to keep track of the value of $\S(f)$
for any polynomial $f$ we will work with. It is however clear from
the definition that the explicit calculation of $\S$ requires, at
least, to know a Gröbner basis of $\syz\F$ which is computationally
expensive to compute, more than a Gröbner basis of $I$ itself. In
fact, we will obtain $\S$ from the fact that $\S(f_{i})=e_{i}$ (unless
$e_{i}\in\LT(\syz\F)$) and from other properties of $\S$.

\section{\label{sec: Properties of S}Properties of $\S$}



\begin{lem}
[Properties of $\S$]\label{lemma_prop_S} Let $f,f_{1},f_{2}\in I\setminus\{0\}$.
The following hold: 
\begin{enumerate}
\item \label{lemma_prop_S1} If $\S(f_{1})>\S(f_{2})$ then: 
\[
\S(f_{1}+f_{2})=\S(f_{1}).
\]
 
\item \label{lemma_prop_S2} If $\S(f_{1})=\S(f_{2})=\sigma$ and there
is no $\lambda\in k^{*}=k\setminus\{0\}$ such that $f_{1}=\lambda f_{2}$,
then there exist $\alpha$ and $\beta$ in $k^{*}$ such that: 
\[
\S(\alpha f_{1}+\beta f_{2})<\sigma.
\]
 
\item \label{lemma_prop_S3} Let $t\in\T^{n}$, then 
\begin{align*}
\S(tf)=t\S(f) & \iff t\S(f)\in\NS(\syz\F),\\
\S(tf)<t\S(f) & \iff t\S(f)\in\LT(\syz\F).
\end{align*}
 
\end{enumerate}
\end{lem}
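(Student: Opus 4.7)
My plan rests on two structural facts that I would establish first: $\psi$ is a $P$-module isomorphism, so it respects sums and monomial multiplication; and the normal-form map against a Gr\"obner basis of $\syz\F$ is $k$-linear, because the unique representative in the $k$-span of $\NS(\syz\F)$ of a given coset is determined by that coset. Together these yield the bookkeeping identities $\NF(\psi(f_{1}+f_{2}))=\NF(\psi(f_{1}))+\NF(\psi(f_{2}))$ and $\NF(\psi(tf))=\NF(t\cdot\NF(\psi(f)))$, which reduce each item to an assertion about leading terms of elements of the $k$-span of $\NS(\syz\F)$.

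For part~(\ref{lemma_prop_S1}), the summands $\NF(\psi(f_{1}))$ and $\NF(\psi(f_{2}))$ are elements of the $k$-span of $\NS(\syz\F)$ with distinct leading terms $\S(f_{1})>\S(f_{2})$, so the larger term cannot cancel and $\S(f_{1}+f_{2})=\S(f_{1})$. For part~(\ref{lemma_prop_S2}), I would set $c_{i}=\LC(\NF(\psi(f_{i})))\in k^{*}$ and choose $\alpha=c_{2}$, $\beta=-c_{1}$, which cancels the leading $\sigma$-coefficients, forcing the resulting normal form to have leading term strictly below $\sigma$. The one extra check is $\alpha f_{1}+\beta f_{2}\neq 0$, so that $\S$ is defined on the combination: if it vanished then, $\psi$ being an isomorphism, $f_{1}$ would be a $k^{*}$-multiple of $f_{2}$, contradicting the hypothesis.

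For part~(\ref{lemma_prop_S3}), let $\eta=\NF(\psi(f))$, so $\LT(\eta)=\S(f)$ and $t\eta$ represents $\psi(tf)$ with leading term $t\S(f)$. I would split on whether $t\S(f)\in\NS(\syz\F)$ or $t\S(f)\in\LT(\syz\F)$. In the first case, the top term of $t\eta$ is ineligible for reduction; reductions act only on lower terms lying in $\LT(\syz\F)$ and replace each by a $k$-linear combination of strictly smaller module terms, so $t\S(f)$ survives in $\NF(t\eta)$, giving $\S(tf)=t\S(f)$. In the second case, the first applicable reduction eliminates $t\S(f)$ in favour of strictly smaller terms, and every subsequent step only further decreases the leading term, so $\S(tf)<t\S(f)$. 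Since $\NS(\syz\F)$ and $\LT(\syz\F)$ partition $\Tnm$, these two implications yield both biconditionals.

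The hardest step will be~(\ref{lemma_prop_S3}), where I must justify two points that are easy to gloss: reductions applied to lower terms of $t\eta$ cannot reintroduce $t\S(f)$, and $\NF(t\eta)$ cannot vanish. The first follows from the standard property that a reduction step replaces a module term by a $k$-linear combination of strictly smaller ones, so no term $\geq t\S(f)$ can appear. The second follows because $\psi$ is an isomorphism and $P$ is an integral domain, so $f\neq 0$ and $t\neq 0$ give $\psi(tf)=t\psi(f)\neq 0$, whence $\NF(t\eta)\neq 0$ and $\S(tf)$ is well defined.
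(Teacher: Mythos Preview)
Your proof is correct and follows essentially the same approach as the paper: parts~(\ref{lemma_prop_S1}) and~(\ref{lemma_prop_S2}) are dispatched there as ``trivial'' (your explicit cancellation argument is the intended one), and for part~(\ref{lemma_prop_S3}) the paper likewise writes $f=v(\alpha\tau e_i+\text{smaller terms})$ in normal form, multiplies by $t$, and case-splits on whether $t\S(f)\in\NS(\syz\F)$. You supply more justification than the paper does---in particular the observations that reductions on lower terms cannot reintroduce $t\S(f)$ and that $\NF(t\eta)\neq 0$---but the underlying argument is the same.
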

\begin{proof}
(\ref{lemma_prop_S1}) and (\ref{lemma_prop_S2}) are trivial.

In order to prove (\ref{lemma_prop_S3}), let $\S(f)=\sigma=\tau e_{i}$.
By the definition of $\S$ we have: 
\[
f=v\left(\alpha\tau e_{i}+\textrm{smaller terms}\right),
\]
 where $\alpha\in k^{*}$, $\tau\in\T^{n}$, and the argument of $v$
is in its normal form with respect to $\syz\F$.

Multiplying both sides by $t$, we get: 
\[
tf=v\left(\alpha t\tau e_{i}+\textrm{smaller terms}\right).
\]
If $t\tau e_{i}=t\sigma\in\NS(\syz\F)$, the leading term of the normal
form of $\alpha t\tau e_{i}+\cdots$ is $t\sigma$ and, in this case,
$\S(tf)=t\sigma=t\S(f)$. Otherwise, $t\sigma\not\in\NS(\syz\F)$,
so the normal form has a leading term which is strictly smaller than
$t\sigma$ and we have $\S(tf)<t\S(f)$. \end{proof}
\begin{cor}
\label{cor: tS(f) =00003D S(tf) if tS(f) =00003D S(g)}To decide whether
$\S(tf)=t\S(f)$, it suffices to know $\NS(\syz\F)$ or, equivalently,
$\LT(\syz\F)$. Also, if $t\S(f)=\S(g)$ for some $g\in I\setminus\{0\}$,
then since $\S(g)\in\NS(\syz\F)$, we can conclude that $t\S(f)=\S(tf)$.
\end{cor}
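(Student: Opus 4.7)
The plan is to derive both assertions of the corollary directly from part~(\ref{lemma_prop_S3}) of Lemma~\ref{lemma_prop_S}, which gives the dichotomy
\[
\S(tf) = t\S(f) \iff t\S(f) \in \NS(\syz\F), \qquad \S(tf) < t\S(f) \iff t\S(f) \in \LT(\syz\F).
\]
No new computation is needed; the corollary is a pair of immediate observations about how one can certify the equality $\S(tf) = t\S(f)$ from information one often has on hand.

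For the first claim, I would simply note that the equivalence above reduces the question ``does $\S(tf) = t\S(f)$?'' to the membership question ``is $t\S(f) \in \NS(\syz\F)$?'' Since $\NS(\syz\F) = \Tnm \setminus \LT(\syz\F)$, knowledge of either set is enough to decide this. In practice $\LT(\syz\F)$ is what one tracks, and I would emphasize this in the sentence.

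For the second claim, I would observe that by the very definition of $\S$, its image lies in $\NS(\syz\F)$; in particular, $\S(g) \in \NS(\syz\F)$ whenever $g \in I \setminus \{0\}$. Therefore the hypothesis $t\S(f) = \S(g)$ forces $t\S(f) \in \NS(\syz\F)$, and a second application of part~(\ref{lemma_prop_S3}) yields $\S(tf) = t\S(f)$.

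There is no real obstacle here; both statements are essentially rephrasings of Lemma~\ref{lemma_prop_S}(\ref{lemma_prop_S3}). The only thing to be careful about is citing the right half of the iff (the membership-in-$\NS(\syz\F)$ side) and noting explicitly that values of $\S$ automatically lie in $\NS(\syz\F)$, which is what lets the hypothesis $t\S(f) = \S(g)$ be converted into the membership needed to apply the lemma.
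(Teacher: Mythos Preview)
Your proposal is correct and matches the paper's approach: the corollary is stated without a separate proof, since both assertions are immediate from Lemma~\ref{lemma_prop_S}(\ref{lemma_prop_S3}) together with the fact that the range of $\S$ is $\NS(\syz\F)$, exactly as you have written. There is nothing to add.
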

One of the key concepts of the classic theory of Gröbner bases is
the polynomial reduction: one says that $f\in P\setminus\{0\}$ reduces
with a $h\in P\setminus\{0\}$, if there exist $\alpha\in k^{*}$
and $t\in\T^{n}$ such that $\LT(f-\alpha th)<\LT(f)$, denoted 
\[
f\xrightarrow{h}g
\]
where $g=f-\alpha th$.

We now introduce a special kind of reduction for a polynomial $f$,
which takes in consideration the value of $\S(f)$.
\begin{defn}
[$\S$-reduction]Let $f,h\in I\setminus\{0\}$, $g\in I$ and $\sigma\in\Tnm$.
We say that $f$ \emph{$\S$-reduces} with respect to $\sigma$ to
$g$ with $h$, 
\[
f\xrightarrow{h}_{\S,\sigma}g
\]
 if there are $t\in\T^{n}$ and $\alpha\in k^{*}$ such that: 
\begin{itemize}
\item $\LT(g)<\LT(f)$ and $f-\alpha th=g$, and
 
\item $\S(th)<\sigma$. 
\end{itemize}
When we omit to specify $\sigma$, we assume $\sigma=\S(f)$. 
\end{defn}
Note that this reduction is defined only for polynomials $f$ which
belong to the ideal $I$, and not for abitrary elements of the ring
$P$. Also, when $\sigma=\S(f)$, since $\S(th)<\S(f)$ we have $\S(g)=\S(f-\alpha th)=\S(f)$.
Hence, when performing one, or more, $\S$-reduction steps with a
polynomial: 
\[
f\xrightarrow{h_{0}}_{\S}f_{1}\xrightarrow{h_{1}}_{\S}f_{2}\xrightarrow{h_{2}}_{\S}\dots\xrightarrow{h_{k-1}}_{\S}f_{k}
\]
 we have $\S(f)=\S(f_{1})=\ldots=\S(f_{k})$ and $\LT(f)>\LT(f_{1})>\ldots>\LT(f_{k})$;
that is, the value of $\S$ is kept constant, while the leading term
decreases.

Let us consider how to characterize those elements which cannot be
further $\S$-reduced with respect to a given $\sigma\in\Tnm$. The
following definition is natural:
\begin{defn}
[$\S$-irreducible polynomial]We say that $f\in I$ is \emph{$\S$-ir\-re\-du\-cible}
with respect to $\sigma\in\Tnm$ if $f=0$ or if there is no $h\in I$
which $\S$-reduces $f$ with respect to $\sigma$. As before, if
we do not specify $\sigma$, we assume $\sigma=\S(f)$. Note that
this definition depends on the values of $I$, $\F$ and $\mu'$.
\end{defn}
We could look for a criterion which decides whether a given set of
nonzero polynomials $G$ is a Gröbner basis by looking at the values
of $\S(g)$ for all $g$ in $G$. However, it is wiser to characterize
a set of polynomials with a property similar to that of a Gröbner
basis, but which also accounts for $\S$. We therefore introduce the
following:
\begin{defn}
[$\S$-Gröbner basis]\label{def: S-GB}We say that $G\subset I$ is
an \emph{$\S$-Gröbner basis} if for each $\S$-irreducible polynomial
$f\in I\setminus\{0\}$, there exist $g\in G$ and $t\in\T^{n}$ such
that $\LT(tg)=\LT(f)$ and $\S(tg)=\S(f)$. \end{defn}
\begin{rem}
An $\S$-Gröbner basis depends on:
\begin{itemize}
\item the ideal $I$, 
\item the term ordering $\mu$ on $\T^{n}$, 
\item the $m$-tuple of generators $\F$, 
\item the ordering $\mu'$ on $\T_{m}^{n}$. 
\end{itemize}
\end{rem}
We will prove in the following section that an $\S$-Gröbner basis
is a Gröbner basis in the usual sense. 
 While Definition~\ref{def: S-GB} is not especially useful from
a computational point of view, inasmuch as it is quantified over an
infinite set, Theorem~\ref{F5crit} will provide us an equivalent
criterion that is quantified over a finite set. Before we can prove
it, however, we need to consider some properties of $\S$-reductions.

\section{\label{sec:Properties-of-S-reductions}Properties of $\S$-reductions}

In this section we will prove the main facts which will lead to the
characterization we are looking for.
\begin{defn}
Let
\[
\begin{array}{lccl}
\varphi: & \LT(I) & \to & \NS(\syz\F)\\
 & t & \mapsto & \min\{\S(f)\mid f\in I,\ \LT(f)=t\}.
\end{array}
\]
 
\end{defn}
In other words, if $t$ belongs to $\LT(I)$, $\varphi$ is the minimum
value $\S$ can take on a polynomial whose leading term is $t$. It
follows that, for any $f\in I\setminus\{0\}$, $\varphi(\LT(f))\leq\S(f)$
always holds.
\begin{lem}
\label{lemma_varphi_bij} $\varphi$ is a bijection, and the inverse
function of $\varphi$ has an explicit formula: $\varphi^{-1}(\sigma)=\min\{t'\in\T^{n}\mid\exists f\in I,\LT(f)=t',\S(f)=\sigma\}.$ \end{lem}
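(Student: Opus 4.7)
The plan is to prove injectivity and surjectivity separately, with the explicit formula for $\varphi^{-1}$ emerging naturally from the surjectivity argument. Throughout I will use the fact that admissible orderings are well-orderings, so every nonempty subset of $\T^n$ or $\Tnm$ has a minimum; this guarantees both that $\varphi$ is well-defined and that the candidate inverse formula makes sense.

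For \emph{injectivity}, I would take distinct $t_1,t_2 \in \LT(I)$ and assume for contradiction that $\varphi(t_1)=\varphi(t_2)=\sigma$. Pick witnesses $f_i \in I$ with $\LT(f_i)=t_i$ and $\S(f_i)=\sigma$. Since $\LT(f_1)\neq \LT(f_2)$, $f_1$ cannot be a scalar multiple of $f_2$, so part~(\ref{lemma_prop_S2}) of Lemma~\ref{lemma_prop_S} gives $\alpha,\beta\in k^*$ with $\S(\alpha f_1+\beta f_2)<\sigma$. Assuming without loss of generality $t_1>t_2$, the element $\alpha f_1 + \beta f_2$ still has leading term $t_1$, so $\varphi(t_1)\le \S(\alpha f_1+\beta f_2) < \sigma$, contradicting $\varphi(t_1)=\sigma$.

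For \emph{surjectivity}, fix $\sigma \in \NS(\syz\F)$ and write $\sigma = s e_i$ with $s\in\T^n$. The key preliminary observation is that $T_\sigma := \{t'\in\T^n \mid \exists\, f\in I,\ \LT(f)=t',\ \S(f)=\sigma\}$ is nonempty: since $\LT(\syz\F)$ is closed under multiplication by $\T^n$, the condition $se_i\notin\LT(\syz\F)$ forces $e_i\notin\LT(\syz\F)$, hence $\S(f_i)=e_i$; then part~(\ref{lemma_prop_S3}) of Lemma~\ref{lemma_prop_S} yields $\S(s f_i)=s e_i = \sigma$, so $\LT(s f_i)\in T_\sigma$. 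Let $t := \min T_\sigma$. By construction there exists $f\in I$ with $\LT(f)=t$ and $\S(f)=\sigma$, so $t\in \LT(I)$ and $\varphi(t)\le \sigma$. To upgrade to equality, suppose $\varphi(t)<\sigma$ and produce $f'\in I$ with $\LT(f')=t$ and $\S(f')<\sigma$; then $h := f - \frac{\LC(f)}{\LC(f')} f'$ satisfies $\LT(h)<t$, and part~(\ref{lemma_prop_S1}) of Lemma~\ref{lemma_prop_S} (applied to $f$ and $-\frac{\LC(f)}{\LC(f')}f'$) gives $\S(h)=\sigma$. This puts $\LT(h)$ into $T_\sigma$ below its minimum, a contradiction. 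Thus $\varphi(t)=\sigma$, proving surjectivity and simultaneously establishing the explicit formula $\varphi^{-1}(\sigma) = t = \min\{t'\in\T^n \mid \exists f\in I,\ \LT(f)=t',\ \S(f)=\sigma\}$.

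The main subtlety, rather than an obstacle, is the nonemptiness step in the surjectivity proof: one must convert the purely syzygetic hypothesis $\sigma\in\NS(\syz\F)$ into the existence of an actual polynomial in $I$ attaining $\S$-value $\sigma$. The bridge is the closure of $\LT(\syz\F)$ under monomial multiplication combined with part~(\ref{lemma_prop_S3}) of Lemma~\ref{lemma_prop_S}, which together show that multiplying $f_i$ by $s$ realizes $\sigma=se_i$ as an $\S$-value. Once this witness is in hand, the rest of the argument is a clean minimization using parts~(\ref{lemma_prop_S1}) and~(\ref{lemma_prop_S2}) of the same lemma.
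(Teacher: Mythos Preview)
Your proof is correct and follows essentially the same route as the paper's: both handle injectivity via Lemma~\ref{lemma_prop_S}(\ref{lemma_prop_S2}) to drop the $\S$-value at the larger leading term, and surjectivity by taking the minimum of $T_\sigma$ and using a leading-term cancellation to contradict minimality if $\varphi(t)<\sigma$. Your version is in fact a bit more careful in two places where the paper is terse---you justify nonemptiness of $T_\sigma$ by first deducing $e_i\notin\LT(\syz\F)$ and then invoking Lemma~\ref{lemma_prop_S}(\ref{lemma_prop_S3}), and you give the explicit coefficient $\LC(f)/\LC(f')$ and cite Lemma~\ref{lemma_prop_S}(\ref{lemma_prop_S1}) for $\S(h)=\sigma$---but the underlying argument is the same.
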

\begin{proof}
We show that $\varphi$ is both injective and surjective.
\begin{description}
\item [{Injective:}] By way of contradiction, suppose there exist $\sigma\in\NS(\syz\F)$
and $t_{1},t_{2}\in\LT(I)$ such that $t_{1}>t_{2}$ and $\sigma=\varphi(t_{1})=\varphi(t_{2})$.
Then we can find $f_{1},f_{2}\in I$ such that $\LT(f_{1})=t_{1}$,
$\LT(f_{2})=t_{2}$, and $\S(f_{1})=\S(f_{2})=\sigma$. By Lemma \ref{lemma_prop_S},
there exist $\alpha,\beta\in k^{*}$ such that $\S(\alpha f_{1}+\beta f_{2})<\sigma$,
but $\LT(\alpha f_{1}+\beta f_{2})=t_{1}$, and therefore $\varphi(t_{1})<\sigma$,
contradicting the hypothesis.
\item [{Surjective:}] Let $\sigma=\tau e_{i}\in\NS(\syz\F)$, define 
\[
t=\min\{t'\in\T^{n}\mid\exists f\in I,\LT(f)=t',\S(f)=\sigma\}.
\]
(This set is not empty because it contains $\LT(\tau f_{i})$.) Let
$f\in I$ be a polynomial with $\LT(f)=t$ and $\S(f)=\sigma$; obviously
$\varphi(t)\leq\sigma$. By way of contradiction, suppose that $\varphi(t)<\sigma$.
Then there exists $f'\in I$ such that $\LT(f')=t$ and $\S(f')<\sigma$.
We can now choose $\alpha,\alpha'\in k^{*}$ with $\LT(\alpha f+\alpha'f')<t$
such that $\S(\alpha f+\alpha'f')=\sigma$. The existence of $\alpha f+\alpha'f'$
contradicts the minimality of $t$; therefore, $\varphi(t)=\sigma$. 
\end{description}
\end{proof}
The fact that $\varphi$ is a bijection will play a crucial role in
most of the subsequent proofs.
\begin{thm}
[$S$-reduction theorem]\label{Sred_thm} Let $f\in I$ and $\sigma\in\Tnm$
such that $f$ is $\S$-irreducible with respect to $\sigma$ and
$f$ is of the form $f=v(\alpha\sigma+\textrm{smaller terms})$, for
some $\alpha\in k^{*}$.

Either the following equivalent propositions hold: 

\begin{enumerate}[(a)]\item $f=0$,\label{Sred_thm_a}

\item$\sigma\in\LT(\syz\F)$, \label{Sred_thm_b}\end{enumerate}

or the following equivalent propositions hold: 
\begin{enumerate}
\item \label{Sred_thm_1} $f\neq0$, 
\item \label{Sred_thm_2} $\sigma\in\NS(\syz\F)$, 
\item \label{Sred_thm_3} $f\neq0$ and $\sigma=\S(f)=\varphi(\LT(f))$. 
\end{enumerate}
\end{thm}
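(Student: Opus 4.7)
My plan is to establish the equivalence of (1), (2), (3) and separately the equivalence of (a) and (b), then note that the two groups are negations of each other across the dichotomy $f=0$ versus $f\ne 0$, so exactly one of them must hold. The guiding principle throughout is that the hypothesis $f=v(\alpha\sigma+\text{smaller terms})$ forces $\S(f)\le\sigma$ whenever $f\ne 0$, since reducing $\alpha\sigma+\text{smaller}$ to its normal form modulo $\syz\F$ never raises leading terms.

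First I would prove (a) $\Rightarrow$ (b): if $f=0$ then $\alpha\sigma+\text{smaller}\in\syz\F$, so its leading term $\alpha\sigma$ lies in $\LT(\syz\F)$, hence $\sigma\in\LT(\syz\F)$. The converse (b) $\Rightarrow$ (a) will fall out as the contrapositive of (1) $\Rightarrow$ (2).

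The crux of the theorem is (1) $\Rightarrow$ (2), and this is where the $\S$-irreducibility hypothesis enters. Assume $f\ne 0$ but, for contradiction, $\sigma\in\LT(\syz\F)$. Then $\alpha\sigma$ itself is reducible modulo $\syz\F$, so the normal form of $\alpha\sigma+\text{smaller}$ has leading term strictly below $\sigma$; in particular $\S(f)<\sigma$. But then $f$ can be $\S$-reduced with respect to $\sigma$ using itself as the reducer: with $t=1$, $\alpha'=1$, and $h=f$ the result is $g=0$, which satisfies $\LT(g)<\LT(f)$ and $\S(1\cdot f)=\S(f)<\sigma$. This contradicts $\S$-irreducibility of $f$ with respect to $\sigma$.

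For (2) $\Rightarrow$ (3), assume $\sigma\in\NS(\syz\F)$. Then $\alpha\sigma$ is not reducible modulo $\syz\F$, and since the smaller terms can only reduce to even smaller terms, the normal form of $\alpha\sigma+\text{smaller}$ retains $\alpha\sigma$ as its leading term; hence $f\ne 0$ and $\S(f)=\sigma$. To upgrade this to $\sigma=\varphi(\LT(f))$, I would suppose for contradiction that some $f'\in I$ has $\LT(f')=\LT(f)$ with $\S(f')<\sigma$; then $f$ $\S$-reduces to $f-(\LC(f)/\LC(f'))f'$ using $h=f'$ and $t=1$, again contradicting $\S$-irreducibility. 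The remaining implication (3) $\Rightarrow$ (2) is immediate since $\S$ takes values in $\NS(\syz\F)$. The main conceptual obstacle is spotting that the $\S$-reduction definition permits a polynomial (or an equal-leading-term sibling) to serve as its own reducer once the relevant $\S$-value has dropped below $\sigma$; once this is recognized, every implication reduces to a careful accounting of leading terms in the normal form.
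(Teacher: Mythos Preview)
Your proof is correct and largely parallels the paper's, but your argument for (1)~$\Rightarrow$~(2) takes a genuinely different route. The paper, having assumed $f\neq 0$ and $\sigma\in\LT(\syz\F)$, invokes $\varphi$: it sets $\sigma'=\varphi(\LT(f))\in\NS(\syz\F)$, picks $g\in I$ with $\LT(g)=\LT(f)$ and $\S(g)=\sigma'$, asserts $\sigma'<\sigma$, and uses this $g$ as an $\S$-reductor. You instead first deduce $\S(f)<\sigma$ directly from the representation $f=v(\alpha\sigma+\text{smaller})$ and then let $f$ serve as its own reductor (with $t=1$, $h=f$), reducing to $0$. Your route is more self-contained in that it does not appeal to $\varphi$ at this step, and it makes explicit the inequality $\S(f)<\sigma$ that the paper actually needs but leaves implicit when it writes ``since $\sigma'<\sigma$''. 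The one caveat is that self-reduction to $g=0$ relies on reading the clause ``$\LT(g)<\LT(f)$'' in the definition of $\S$-reduction as satisfied when $g=0$; this is the intended convention (the paper itself speaks of $\S$-reductions yielding~$0$), but you should state it. For (a)~$\Rightarrow$~(b) and (2)~$\Rightarrow$~(3) your arguments and the paper's coincide in substance.
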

\begin{proof}
~
\begin{description}
\item [{\ref{Sred_thm_a}~$\Rightarrow$~\ref{Sred_thm_b}}] Suppose
$f=0$, then $0=f=v(\alpha\sigma+\cdots)$. It follows that $\sigma\in\LT(\syz\F)$.
\item [{\ref{Sred_thm_b}~$\Rightarrow$~\ref{Sred_thm_a}}] Assume by
way of contradiction that $\sigma\in\LT(\syz\F)$ and $f\neq0$. Let
$t=\LT(f)$, and consider $\sigma'=\varphi(t)\in\NS(\syz\F)$. There
exists $g\in I$ such that $\LT(g)=t$ and $\S(g)=\sigma'$; since
$\sigma'<\sigma$, $g$ is an $\S$-reductor for $f$ with respect
to $\sigma$, contradicting the fact that $f$ is $\S$-irreducible.
Therefore, $f=0$.
\item [{\ref{Sred_thm_1}~$\Rightarrow$~\ref{Sred_thm_2}}] Assume by
way of contradiction that $f\neq0$ and $\sigma\not\in\NS\left(\syz\F\right)$.
Then $\sigma\in\LT\left(\syz\F\right)$. Let $t=\LT\left(f\right)$,
and consider $\sigma'=\varphi\left(t\right)\in\NS\left(\syz\F\right)$.
There exists $g\in I$ such that $\LT\left(g\right)=t$ and $\S\left(g\right)=\sigma'$;
since $\sigma'<\sigma$, $g$ is an $\S$-reductor for $f$ with respect
to $\sigma$, constradicting the hypothesis that $f$ is $\S$-irreducible.
Therefore, $\sigma\in\NS\left(\syz\F\right)$.
\item [{\ref{Sred_thm_2}~$\Rightarrow$~\ref{Sred_thm_3}}] Assume $\sigma\in\NS(\syz\F)$.
Necessarily, $\sigma=\S(f)$. Suppose now that $\sigma\neq\varphi(\LT(f))$;
then $\S(f)>\varphi(\LT(f))$. Therefore there exists a polynomial
$g\in I$ such that $t=\LT(g)=\LT(f)$ and $\varphi(t)=\S(g)=\varphi(\LT(f))<\S(f)$.
It follows that $g$ is an $\S$-reductor of $f$, and $f$ is not
$\S$-irreducible. 
\item [{\ref{Sred_thm_3}~$\Rightarrow$~\ref{Sred_thm_1}}] Obvious.
\end{description}
\end{proof}
Theorem~\ref{Sred_thm} implies that it only makes sense to consider
those polynomials $f$ that are $\S$-irreducible with respect to
$\S(f)$. Also, an $\S$-reduction yields $0$ if and only if performed
with respect to a $\sigma\in\LT(\syz\F)$; conversely, if an $\S$-reduction
yields a non-zero polynomial, then we know that it was performed with
respect to some $\sigma\in\NS(\syz\F)$. 

\begin{rem}
\label{rem: f S-irred iff S(f)=00003Dphi(LT(f))}Observe that a polynomial
$f$ is $\S$-irreducible iff $\S\left(f\right)=\varphi\left(\LT\left(f\right)\right)$;
otherwise, $\S\left(f\right)>\varphi\left(\LT\left(f\right)\right)$,
and we could find $g\in I$ such that $\LT\left(g\right)=\LT\left(f\right)$
and $\S\left(g\right)=\varphi\left(\LT\left(f\right)\right)$, so
that $g$ would $\S$-reduce $f$.
\end{rem}
In strict analogy with the classic Gröbner basis theory we have the
following result:
\begin{prop}
\label{SGB_Sreductor} If $G$ is an $\S$-Gröbner basis then for
any nonzero $f\in I$ such that $f$ is not $\S$-irreducible, there
exists $g\in G$ and $t\in\T^{n}$ such that: 
\begin{itemize}
\item $\LT(tg)=\LT(f)$, 
\item $\S(tg)=t\S(g)<\S(f)$. 
\end{itemize}
That is, it is always possible to find an $\S$-reductor for $f$
in $G$.\end{prop}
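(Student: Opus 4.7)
My plan is to replace $f$ by an $\S$-irreducible polynomial with the same leading term but strictly smaller signature, apply the $\S$-Gr\"obner basis property to the replacement, and then argue that the resulting pair may be chosen so that no signature drop occurs.

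First I would invoke Remark~\ref{rem: f S-irred iff S(f)=00003Dphi(LT(f))}: since $f$ is not $\S$-irreducible, $\S(f) > \varphi(\LT(f))$. Set $\sigma := \varphi(\LT(f))$, which lies in $\NS(\syz\F)$ by definition of $\varphi$. The explicit formula for $\varphi^{-1}$ in Lemma~\ref{lemma_varphi_bij} produces $f^{\ast} \in I \setminus \{0\}$ with $\LT(f^{\ast}) = \LT(f) = \varphi^{-1}(\sigma)$ and $\S(f^{\ast}) = \sigma$; since $\S(f^{\ast}) = \varphi(\LT(f^{\ast}))$, the same remark tells us $f^{\ast}$ is $\S$-irreducible. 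Applying Definition~\ref{def: S-GB} to $f^{\ast}$ then produces $g \in G$ and $t \in \T^{n}$ with $\LT(tg) = \LT(f^{\ast}) = \LT(f)$ and $\S(tg) = \S(f^{\ast}) = \sigma$. The first bullet of the conclusion is now immediate, and since $\sigma < \S(f)$ the inequality $\S(tg) < \S(f)$ is also in hand. It only remains to verify the equality $\S(tg) = t\S(g)$; by Lemma~\ref{lemma_prop_S}(\ref{lemma_prop_S3}), this is equivalent to $t\S(g) \in \NS(\syz\F)$.

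The main obstacle is precisely this last verification, since Definition~\ref{def: S-GB} only guarantees the equality $\S(tg) = \S(f^{\ast})$ and does not a priori preclude a signature drop $\S(tg) < t\S(g)$. My approach would be to pick the pair $(g, t)$ extremally, minimizing $t\S(g)$ under $\mu'$ among all pairs in $G \times \T^{n}$ satisfying $\LT(tg) = \LT(f)$ and $\S(tg) = \sigma$. If instead $t\S(g) > \sigma$, then $t\S(g) \in \LT(\syz\F)$, and Theorem~\ref{Sred_thm} applied to $tg$ at signature $t\S(g)$ (using $tg \neq 0$) forces $tg$ to be $\S$-reducible at $t\S(g)$, producing an in-$I$ reductor. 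Chasing this reductor through another application of Definition~\ref{def: S-GB} to the $\S$-irreducible representative at $\LT(f)$ should yield a competing pair $(g', t')$ with $t'\S(g') < t\S(g)$, contradicting extremality. Hence $t\S(g) = \sigma = \S(tg)$, and the proposition follows.
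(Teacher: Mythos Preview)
Your overall route is the paper's: replace $f$ by an $\S$-irreducible $h$ (your $f^{\ast}$) with $\LT(h)=\LT(f)$ and $\S(h)<\S(f)$, then invoke Definition~\ref{def: S-GB} for $h$ to extract $(g,t)$ with $\LT(tg)=\LT(f)$ and $\S(tg)=\S(h)<\S(f)$.

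Where you diverge is the last step, the equality $t\S(g)=\S(tg)$. The paper handles this in one stroke by citing Corollary~\ref{cor: tS(f) =00003D S(tf) if tS(f) =00003D S(g)}: throughout the paper the $\S$-Gr\"obner basis condition is read as supplying $t\S(g)=\S(h)$ (compare the proofs of Proposition~\ref{prop_finiteSGb} and Theorem~\ref{F5crit}, where the same move is made), and since $\S(h)\in\NS(\syz\F)$, the corollary then yields $\S(tg)=t\S(g)$ immediately. Under that reading your third paragraph is unnecessary.

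Your extremal argument, as written, has a real gap. From the fact that $tg$ is $\S$-reducible with respect to $t\S(g)$ you get some $h'\in I$ and $s\in\T^{n}$ with $\LT(sh')=\LT(f)$ and $\S(sh')<t\S(g)$; but $h'$ need not lie in $G$, and reapplying Definition~\ref{def: S-GB} to the $\S$-irreducible representative at $\LT(f)$ only returns another pair $(g',t')$ with $\S(t'g')=\sigma$ --- nothing forces $t'\S(g')<t\S(g)$, so you have not produced a competitor violating minimality. The phrase ``should yield a competing pair'' is exactly the missing step, and I do not see how to close it without reverting to the paper's reading of the definition (in which case the whole extremal detour is moot).
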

\begin{proof}
Since $f$ is not $\S$-irreducible, take $h$ $\S$-irreducible such
that $\LT\left(f\right)=\LT\left(h\right)$. From the remark above,
$\S\left(h\right)<\S\left(f\right)$, so $h$ is an $\S$-reductor
of $f$. We can then find $t\in\T$ and $g\in G$ such that $t\LT\left(g\right)=\LT\left(h\right)=\LT\left(f\right)$
and (using Corollary~\ref{cor: tS(f) =00003D S(tf) if tS(f) =00003D S(g)})
$\S\left(tg\right)=t\S\left(g\right)=\S\left(h\right)$. 
\end{proof}
This fact combined with lemma \ref{lemma_varphi_bij} leads immediately
to:
\begin{prop}
If $G$ is an $\S$-Gröbner basis, then $G$ is a Gröbner basis with
respect to the ordering $\mu$ on $\T^{n}$. \end{prop}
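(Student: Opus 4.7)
The goal is to show that every nonzero $f\in I$ has some $g\in G$ and $t\in\T^{n}$ with $\LT(tg)=\LT(f)$, which is the usual Gr\"obner basis condition. The definition of $\S$-Gr\"obner basis already gives this directly, but only for $\S$-irreducible $f$. The plan is to reduce the general case to this one, and the surjectivity half of Lemma~\ref{lemma_varphi_bij} is what makes the reduction possible.

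My first step would be to fix an arbitrary $f\in I\setminus\{0\}$ and set $t=\LT(f)\in\LT(I)$. By Lemma~\ref{lemma_varphi_bij}, $\varphi$ is a bijection from $\LT(I)$ onto $\NS(\syz\F)$; in particular $\sigma:=\varphi(t)$ is an element of $\NS(\syz\F)$. Unpacking the definition of $\varphi$, there exists $h\in I$ with $\LT(h)=t$ and $\S(h)=\sigma=\varphi(\LT(h))$. By Remark~\ref{rem: f S-irred iff S(f)=00003Dphi(LT(f))}, this equality is exactly the criterion for $h$ to be $\S$-irreducible, so $h$ is a nonzero $\S$-irreducible element of $I$ with the same leading term as $f$.

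Now I would invoke the hypothesis that $G$ is an $\S$-Gr\"obner basis applied to $h$: there exist $g\in G$ and $s\in\T^{n}$ with $\LT(sg)=\LT(h)$ (and $\S(sg)=\S(h)$, though this second condition is not needed for the conclusion). Since $\LT(h)=\LT(f)$, we obtain $\LT(sg)=\LT(f)$, which is precisely the defining property of a Gr\"obner basis. Because $f$ was arbitrary, $G$ is a Gr\"obner basis of $I$ with respect to $\mu$.

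There is no real obstacle here: once Lemma~\ref{lemma_varphi_bij} provides a canonical $\S$-irreducible representative for each leading term in $\LT(I)$, the $\S$-Gr\"obner basis condition applied to that representative immediately upgrades to the ordinary Gr\"obner basis condition. An alternative phrasing would split on whether $f$ is itself $\S$-irreducible, handling the irreducible case by the definition and the reducible case by Proposition~\ref{SGB_Sreductor}; but the $\varphi$-based argument is more uniform and makes the role of the bijection transparent.
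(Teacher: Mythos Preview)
Your proof is correct and follows essentially the same route as the paper: both use the bijection $\varphi$ of Lemma~\ref{lemma_varphi_bij} to produce, for each $t\in\LT(I)$, an $\S$-irreducible element with leading term $t$, and then apply the defining property of an $\S$-Gr\"obner basis to that element. Your appeal to Remark~\ref{rem: f S-irred iff S(f)=00003Dphi(LT(f))} is in fact slightly cleaner than the paper's phrasing, which invokes Proposition~\ref{SGB_Sreductor} to ``$\S$-reduce if necessary'' even though the constructed polynomial already satisfies $\S(f)=\varphi(\LT(f))$ and is therefore $\S$-irreducible by that same remark.
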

\begin{proof}
For any $t\in\LT(I)$, Lemma \ref{lemma_varphi_bij} implies that
there exists $\sigma\in\Tnm$ such that $\varphi^{-1}(\sigma)=t$.
Let $f\in I$ such that $\LT(f)=t$ and $\S(f)=\sigma$. From Proposition~\ref{SGB_Sreductor},
we may assume that $f$ is $\S$-irreducible (if not, $\S$-reduce
it). Then $\exists g\in G$, $u\in\T^{n}$ such that $\LT(ug)=\LT(f)=t$.
Hence the set $\{\LT(g)\mid g\in G\}$ generates $\LT(I)$ and $(G)\subseteq I$.
Therefore $G$ is a Gröbner basis for $I$.\end{proof}
\begin{prop}
\label{prop_finiteSGb} Every $\S$-Gröbner basis contains a finite
$\S$-Gröbner basis. 
\end{prop}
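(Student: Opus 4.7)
My plan is to apply Dickson's lemma to pairs associated to elements of $G$, grouped by the module index of the signature. For each $l \in \{1,\ldots,m\}$, set $G^{(l)} = \{g \in G : \S(g) \in \T^n \cdot e_l\}$ and write $\S(g) = \tau_g e_l$ for $g \in G^{(l)}$. Viewing $\Sigma^{(l)} = \{(\LT(g),\tau_g) : g \in G^{(l)}\}$ as a subset of $\N^{2n}$ via exponent vectors, Dickson's lemma supplies a finite set of componentwise-minimal pairs. Select one representative $g_{l,j} \in G^{(l)}$ per minimal pair and let $G' \subseteq G$ be the (finite) union of these representatives over all $l$.

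To verify that $G'$ is an $\S$-Gr\"obner basis, take any $\S$-irreducible $f$ with $\LT(f) = t$ and $\S(f) = \tau e_l$. The hypothesis that $G$ is an $\S$-Gr\"obner basis produces $g \in G$ and $u \in \T^n$ with $u\LT(g) = t$ and $\S(ug) = \tau e_l$. In the principal case $u\S(g) \in \NS(\syz\F)$, one concludes $g \in G^{(l)}$ and $u\tau_g = \tau$, and Dickson yields $g_{l,j} \in G'$ with $\LT(g_{l,j}) \mid \LT(g)$ and $\tau_{g_{l,j}} \mid \tau_g$ componentwise.

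The main obstacle is bridging Dickson's componentwise divisibility to the diagonal divisibility required by $\S$-Gr\"obner covering: one needs a single $u' \in \T^n$ satisfying $u'\LT(g_{l,j}) = t$ and $u'\tau_{g_{l,j}} = \tau$ \emph{simultaneously}, whereas componentwise divisibility alone may leave the two quotients $\LT(g)/\LT(g_{l,j})$ and $\tau_g/\tau_{g_{l,j}}$ unequal. I plan to handle this by iteration of Lemma~\ref{lemma_prop_S}(\ref{lemma_prop_S2}): when the quotients differ, $g$ and a suitable monomial multiple of $g_{l,j}$ share the same signature but are not proportional, so a $k$-linear combination produces a polynomial of strictly smaller signature. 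Descending through the well-ordered space $\Tnm$, and using the bijectivity of $\varphi$ from Lemma~\ref{lemma_varphi_bij} together with Theorem~\ref{Sred_thm} to keep the descent inside the graph of $\varphi$, one eventually reaches a diagonal-compatible minimal pair that covers $f$. The secondary case $u\S(g) \in \LT(\syz\F)$ is handled analogously by tracking the reduction of $u\S(g)$ through the (finitely many) generators of $\LT(\syz\F)$.
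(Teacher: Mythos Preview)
The paper's proof and yours share the same first move but with a crucial difference in packaging. The paper sends $g\mapsto\vartheta(g)=(\LT(g),\S(g))\in P\oplus P^{m}\cong P^{m+1}$ and uses noetherianity of this module to extract a finite $G'\subseteq G$ whose images generate the same $P$-submodule $M$. Because the $P$-action on $P\oplus P^{m}$ is already diagonal --- $t\cdot(\LT(g),\S(g))=(t\LT(g),t\S(g))$ --- finite generation is phrased from the outset in terms of the \emph{simultaneous} multiplication you need; the paper then passes from $\vartheta(g_i)\in M$ to a single term $t_i$ and a single $g_{j_i}\in G'$ with $t_i\vartheta(g_{j_i})=\vartheta(g_i)$, after which the $\S$-Gr\"obner property of $G'$ is immediate. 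Your Dickson encoding in $\N^{2n}$ discards exactly this diagonal structure and leaves you with two independent divisibilities; you correctly identify that reconciling them is the whole difficulty.

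Your proposed repair, however, does not close the gap. Applying Lemma~\ref{lemma_prop_S}(\ref{lemma_prop_S2}) to $g$ and a monomial multiple of $g_{l,j}$ produces a polynomial of strictly smaller signature, but that polynomial lies in $I$, not in $G$, and certainly not in the fixed finite set $G'$. To continue you must re-apply the $\S$-Gr\"obner property of $G$ to it, obtaining some new $g\in G$, and you are back where you started with no invariant tied to $G'$ that has decreased. Well-foundedness of $\Tnm$ guarantees the signature descent halts, but only at some $\S$-irreducible element of $I$; nothing forces it to land on an element of $G'$ that diagonally covers the original $f$. Indeed the Appendix exhibits a module ordering $\mu'$ (a module ordering but not compatible with $\mu$) under which every $\S$-Gr\"obner basis is infinite, so any correct argument must exploit compatibility, and your sketch never does. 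The ``secondary case'' $u\S(g)\in\LT(\syz\F)$ is not analogous either: there $\S(ug)<u\S(g)$, so you cannot read off an index $l$ and a monomial $\tau_g$ with $u\tau_g=\tau$, and your $\N^{2n}$ bookkeeping does not even get started.
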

This proof's reference to ``monomodule'' is not a misspelling; see~\cite{CompCommAlgI}
for more information.
\begin{proof}
Let $G=\{g_{i}\}_{i\in\mathcal{I}}$ be an $\S$-Gröbner basis. Define
the map 
\[
\begin{array}{lccl}
\vartheta: & G & \to & \T^{n}\oplus\T_{m}^{n}\\
 & g_{i} & \mapsto & (\LT(g_{i}),\S(g_{i})).
\end{array}
\]
The image $\vartheta(G)$ generates a submodule $M$ of the $\T^{n}\oplus\T_{m}^{n}$-monomodule
$\T^{n}\oplus\T_{m}^{n}$. This is also a noetherian monomodule;
therefore, there exists a finite subset $\mathcal{J}$ of $\mathcal{I}$
such that $\vartheta(G')$ generates $M$, for some $G'=\{g_{j}\}_{j\in\mathcal{J}}$.

We claim that $G'$ is itself an $\S$-Gröbner basis. To see this,
let $f\in I$ be an $\S$-irreducible polynomial. By definition, $\S\left(f\right)\in\NS\left(\syz\F\right)$.
Since $G$ is an $\S$-Gröbner basis, we can find a $g_{i}\in G$
and a $t\in\T^{n}$ such that $t\S\left(g_{i}\right)=\S(tg_{i})=\S(f)$
and $t\LT\left(g_{i}\right)=\LT(tg_{i})=\LT(f)$ (using Lemma~\ref{lemma_prop_S}(\ref{lemma_prop_S3})
for $t\S\left(g_{i}\right)=\S\left(tg_{i}\right)$). If $i\in\mathcal{J}$,
then $g_{i}\in G'$ and we're fine. Otherwise, $i\in\mathcal{I}\setminus\mathcal{J}$;
since $\vartheta(g_{i})\in M$, there exist $j_{i}\in\mathcal{J}$,
$u\in\T^{n}$, and $ve_{k}\in\T_{m}^{n}$ such that 
\[
\left(u,ve_{k}\right)\cdot\vartheta(g_{j_{i}})=\vartheta(g_{i}).
\]
We consider three cases.

If $u=v$, then $t'=ut\in\T^{n}$ satisfies $t'\LT\left(g_{j_{i}}\right)=\LT\left(f\right)$
and $t'\S\left(g_{j_{i}}\right)=\S\left(f\right)$, so we're fine.

If $u<v$, then $t'=ut\in\T^{n}$ satisfies $t'\LT\left(g_{j_{i}}\right)=\LT\left(f\right)$
and $t'\S\left(g_{j_{i}}\right)<\S\left(f\right)$, contradicting
the hypothesis that $f$ is $\S$-irreducible.

If $u>v$, then there exist $\alpha\in k$ and $t'=vt\in\T^{n}$ such
that $p=f-\alpha t'g_{j_{i}}$ satisfies $\LT\left(p\right)=\LT\left(f\right)$,
but $\S\left(p\right)<\S\left(f\right)$, contradicting the hypothesis
that $f$ is $\S$-irreducible.

Since the other two cases lead to contradiction, we have found $g_{j_{i}}\in G'$
and $t'\in\T^{n}$ which satisfy the $\S$-Gröbner basis property
for $f$. Since $f$ was an arbitrary $\S$-irreducible element of
$I$, we conclude that $G'$ is an $\S$-Gr\"obner basis.
\end{proof}
The elements of $G'$ will prove critically important when we examine
our algorithm, so we will identify them by a special term.
\begin{defn}
[Primitive $S$-irreducible polynomial]We say that a nonzero polynomial
$f$ $\S$-irreducible with respect to $\S(f)$ is \emph{primitive}
\emph{$\S$-irreducible }if there are no polynomials $f'\in I\setminus\{0\}$
and terms $t\in\T^{n}\backslash\left\{ 1\right\} $ such that $f'$
is $\S$-irreducible, $\LT(tf')=\LT(f)$ and $\S(tf')=\S(f)$.
\end{defn}
The proof of Proposition~\ref{prop_finiteSGb} implies that if we
have an $\S$-Gröbner basis $G$, then we can obtain a finite $\S$-Gröbner
basis by keeping a subset of primitive $\S$-irreducible polynomials
with different leading terms. Hence there exist $\S$-Gröbner bases
which contain only primitive $\S$-irreducible polynomials.

\section{\label{sec:The main result}The main result}

First we adapt the definition of a normal pair in~\cite{f5} to reflect
primitive $\S$-irreducible polynomials.
\begin{defn}
[Normal Pair]\label{def_normpair} Given $g_{1},g_{2}\in I\setminus\{0\}$,
let $\Spol(g_{1},g_{2})=u_{1}g_{1}-u_{2}g_{2}$ be the S-polynomial
of $g_{1}$ and $g_{2}$; that is, $u_{i}=\frac{\lcm(\LT(g_{1}),\LT(g_{2}))}{\LC(g_{i})\LT(g_{i})}$.
We say that $(g_{1},g_{2})$ is a \emph{normal pair} if: 
\begin{enumerate}
\item \label{def_normpair_1} $g_{i}$ is a primitive $\S$-irreducible
polynomial for $i=1,2$, 
\item \label{def_normpair_2} $\S(u_{i}g_{i})=\LT(u_{i})\S(g_{i})$ for
$i=1,2$, 
\item \label{def_normpair_3} $\S(u_{1}g_{1})\neq\S(u_{2}g_{2})$. 
\end{enumerate}
\end{defn}
\begin{rem}
\label{rem_u1neq1} With this definition, if $(g_{1},g_{2})$ is a
normal pair, then 
\[
\S(\Spol(g_{1},g_{2}))=\max(\S(u_{1}g_{1}),\S(u_{2}g_{2}))
\]
 will always hold. In addition, if $\S(u_{1}g_{1})>\S(u_{2}g_{2})$,
then $u_{1}\neq1$, as if $u_{1}$ were $1$, $g_{2}$ would be an
$\S$-reductor of $g_{1}$. Therefore $\S(\Spol(g_{1},g_{2}))>\max(\S(g_{1}),\S(g_{2}))$. \end{rem}
\begin{thm}
[F5 criterion]\label{F5crit} Suppose that $G$ is a set of $\S$-irreducible
polynomials of $I$, such that: 
\begin{itemize}
\item for each $i=1,\ldots,m$ such that $e_{i}\not\in\LT(\syz\F)$ there
exists $g_{i}\in G$ such that $\S(g_{i})=e_{i}$, and
\item for any $g_{1},g_{2}\in G$ such that $(g_{1},g_{2})$ is a normal
pair, there exist $g\in G$ and $t\in\T^{n}$ such that $tg$ is $\S$-irreducible
and $\S(tg)=\S(\Spol(g_{1},g_{2}))$. 
\end{itemize}
Then $G$ is a $\S$-Gröbner basis of $I$. \end{thm}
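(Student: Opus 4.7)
The plan is a Buchberger-style representation argument adapted to the $\S$-setting, by contradiction and minimal counterexample.

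\textbf{Reduction.} Suppose $G$ is not an $\S$-Gröbner basis and pick an $\S$-irreducible $f \in I \setminus \{0\}$ violating the property. It suffices to produce an ``admissible representation'' $f = \sum_i c_i t_i g_{\alpha_i}$ with $g_{\alpha_i} \in G$, $c_i \in k^*$, $t_i \in \T^n$, $\S(t_i g_{\alpha_i}) = t_i \S(g_{\alpha_i})$ for every $i$, and with $\max_i t_i \S(g_{\alpha_i}) = \S(f)$ and $\max_i \LT(t_i g_{\alpha_i}) = \LT(f)$. Any index $i^*$ attaining the $\LT$-maximum must then satisfy $\S(t_{i^*} g_{\alpha_{i^*}}) = \S(f)$, since a strict inequality would produce a polynomial with $\LT = \LT(f)$ but $\S < \S(f)$, forcing $\varphi(\LT(f)) < \S(f)$ and contradicting $\S$-irreducibility via Remark~\ref{rem: f S-irred iff S(f)=00003Dphi(LT(f))}. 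Thus $(g_{\alpha_{i^*}}, t_{i^*})$ witnesses the $\S$-Gröbner basis property, contrary to our choice of $f$.

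\textbf{Existence of some admissible representation.} I would prove by induction on $\S(f)$ in the well-order $\mu'$ that every $f \in I \setminus \{0\}$ admits a representation satisfying the per-term condition $\S(t_i g_{\alpha_i}) = t_i \S(g_{\alpha_i})$, without the top-match constraint. Writing $\S(f) = \tau e_k$, the downward-closure of $\NS(\syz\F)$ under divisibility gives $e_k \in \NS(\syz\F)$, so hypothesis~1 provides $g_k \in G$ with $\S(g_k) = e_k$, and Lemma~\ref{lemma_prop_S}(\ref{lemma_prop_S3}) gives $\S(\tau g_k) = \S(f)$. Lemma~\ref{lemma_prop_S}(\ref{lemma_prop_S2}) then either matches $f = \lambda \tau g_k$, or produces a strictly lower-$\S$ remainder $\alpha f + \beta \tau g_k$ to handle by the induction hypothesis.

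\textbf{Minimization and normal-pair rewriting.} Among all admissible representations of $f$, pick one minimizing the lex pair $(\sigma^{**}, \tau^{**}) = (\max_i t_i \S(g_{\alpha_i}), \max_i \LT(t_i g_{\alpha_i}))$. The claim is that $(\sigma^{**}, \tau^{**}) = (\S(f), \LT(f))$. If $\sigma^{**} > \S(f)$, the leading $\psi$-coefficients at $\sigma^{**}$ must sum to zero, so at least two indices $i, j$ satisfy $t_i \S(g_{\alpha_i}) = t_j \S(g_{\alpha_j}) = \sigma^{**}$; analogous cancellation at $\LT$-level handles the case $\sigma^{**} = \S(f)$ but $\tau^{**} > \LT(f)$. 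For each such pair, rewrite $c_i t_i g_{\alpha_i} + c_j t_j g_{\alpha_j}$ as a scalar multiple of a rescaled $\Spol(g_{\alpha_i}, g_{\alpha_j})$ plus lower-order terms. If $(g_{\alpha_i}, g_{\alpha_j})$ is a normal pair, hypothesis~2 supplies $t'g'$ with $g' \in G$, $t'g'$ $\S$-irreducible, and $\S(t'g') = \S(\Spol)$, which I splice back using the representation construction from the previous paragraph; if the pair is not normal, then $\S(\Spol)$ is already strictly smaller (via failure of Def~\ref{def_normpair}(\ref{def_normpair_3}) or (\ref{def_normpair_2})) and the same construction applies directly. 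Either way the lex pair strictly decreases, contradicting minimality.

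\textbf{Main obstacle.} The delicate step is the bookkeeping in the third paragraph: translating between the $\S$-lcm structure (which dictates how cancelling terms pair up within the representation) and the $\LT$-lcm structure (which appears in the definition of $\Spol$ and of normal pairs), verifying that each rewrite strictly decreases the lex pair, and handling the fact that normal pairs require \emph{primitive} $\S$-irreducibility while the hypothesis on $G$ assumes only $\S$-irreducibility. For this last point, the construction in the proof of Proposition~\ref{prop_finiteSGb} lets us WLOG replace $G$ by a subset of primitive $\S$-irreducible polynomials, making the normal-pair hypothesis applicable.
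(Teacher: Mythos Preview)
Your approach is genuinely different from the paper's: you attempt a Buchberger-style representation argument (minimize a lex pair over all admissible $G$-representations of $f$, then rewrite cancelling top terms via $\Spol$), whereas the paper avoids representations entirely. The paper fixes the minimal $\sigma\in\NS(\syz\F)$ for which the $\S$-Gr\"obner property fails, then among all $g\in G$ and $\tau\in\T^n$ with $\tau\S(g)=\sigma$ picks one minimizing $\LT(\tau g)$; the minimality of $\sigma$ (plus Proposition~\ref{SGB_Sreductor}) produces a reductor $g'\in G$ for $\tau g$ directly, $(g,g')$ is then automatically a normal pair, and hypothesis~2 yields $g''$ with $\S(\widehat\tau\tau''g'')=\sigma$ but $\LT(\widehat\tau\tau''g'')<\LT(\tau g)$, contradicting the choice of $(g,\tau)$. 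So the paper minimizes over \emph{single} terms $\tau g$ with the right signature, rather than over full representations of $f$; this is why the bookkeeping is trivial.

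Your sketch has a real gap at the splice-back step, and it is exactly the obstacle you flag. Hypothesis~2 gives you only a single term $t'g'$ with $\S(t'g')=\S(\Spol)$; to feed $\tau\cdot\Spol(g_{\alpha_i},g_{\alpha_j})$ (or the residual $\tau\Spol-c\,\tau t'g'$) back into the representation you invoke the construction of your paragraph~2. But that construction controls only $\max_i t_i\S(g_{\alpha_i})$, not $\max_i\LT(t_ig_{\alpha_i})$: the term $\tau g_k$ you introduce there can have $\LT(\tau g_k)$ arbitrarily large relative to $\LT$ of the polynomial being represented. So after splicing you may well re-introduce terms with $\LT\geq\tau^{**}$, and the claimed strict lex decrease of $(\sigma^{**},\tau^{**})$ is unproved. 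To rescue your approach you would need an inductive statement asserting that every $h$ with $\S(h)<\sigma$ (or with $(\S(h),\LT(h))$ lex-below $(\sigma,\tau^{**})$) already has a representation with both maxima matched; but you have not set that up, and once you try to, you will find you are essentially reproving the minimal-$\sigma$ step the paper uses. The paper's single-term minimization cuts through all of this: since the induction is on $\sigma$ alone, the ``representation'' of the lower-signature piece $\alpha f+\beta\tau g$ is replaced by a direct appeal to the $\S$-Gr\"obner property at signatures $<\sigma$, which immediately hands you $g'\in G$ with the correct $\LT$.
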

\begin{rem}
[Rewritable criterion]\label{rem_chooseS} Note that the second condition
does not explicitly involve the S-po\-ly\-no\-mial of a pair $(g_{1},g_{2})$,
but cares only about $\S(\Spol(g_{1},g_{2}))$. Hence, we can think
of this as a criterion to choose elements of $\NS(\syz\F)$ instead
of polynomials. Additionally, if two or more normal pairs are such
that $\S$ takes the same value on their S-polynomials, we can freely
consider just one of them.\end{rem}
\begin{proof}
As noted at the end of the previous section, we may, without loss
of generality, assume that the elements of $G$ are primitive $\S$-irreducible
and have distinct leading terms. By way of contradiction, suppose
that there exists a minimal $\sigma\in\NS(\syz\F)$ and an $\S$-irreducible
$f\in I\backslash\left\{ 0\right\} $ with $\S\left(f\right)=\sigma$
and the $\S$-Gröbner basis property does not hold for $f$ and $\sigma$.
That is, for all $g\in G$ and for all $t\in\T^{n}$, $\LT\left(tg\right)\neq\LT\left(f\right)$
or $\S\left(tg\right)\neq\S\left(f\right)$.

The first hypothesis implies that there exist at least one primitive
$\S$-irreducible $g\in G$ and some $\tau\in\T^{n}$ such that $\tau\S(g)=\S\left(f\right)=\sigma$;
among the possible choices for $g$ and $\tau$, pick one which minimizes
$\LT(\tau g)$. By Lemma~\ref{lemma_prop_S}(\ref{lemma_prop_S3}),
$\S\left(\tau g\right)=\tau\S\left(g\right)=\sigma$. Hence $\LT\left(\tau g\right)\neq\LT\left(f\right)$.
By Remark~\ref{rem: f S-irred iff S(f)=00003Dphi(LT(f))}, $\S\left(f\right)=\varphi\left(\LT\left(f\right)\right)$,
and by Lemma~\ref{lemma_varphi_bij}, $\LT\left(\tau g\right)>\LT\left(f\right)$.
In addition, we have $\S\left(\tau g\right)=\S\left(f\right)=\varphi\left(\LT\left(f\right)\right)\neq\varphi\left(\LT\left(\tau g\right)\right)$,
so again by Remark~\ref{rem: f S-irred iff S(f)=00003Dphi(LT(f))},
$\tau g$ is not $\S$-irreducible.

By Lemma \ref{lemma_prop_S}(\ref{lemma_prop_S2}), there exist $\alpha,\beta\in k^{*}$
such that $\S(\alpha f+\beta\tau g)=\sigma'$ for some $\sigma'<\sigma$.
Since $\sigma$ was chosen to be the minimal element of $\NS\left(\syz\F\right)$
such that the $\S$-Gr\"obner basis property does not hold, Definition~\ref{def: S-GB}
and Proposition~\ref{SGB_Sreductor} applied to $\alpha f+\beta\tau g$
imply that there exist $g'\in G$ and $\tau'\in\T^{n}$ such that
$\LT\left(\tau'g'\right)=\LT\left(\alpha f+\beta\tau g\right)=\LT\left(\tau g\right)$
and 
\[
\S\left(\tau'g'\right)=\tau'\S\left(g'\right)\leq\S\left(\alpha f+\beta\tau g\right)=\sigma'<\sigma=\S\left(\tau g\right).
\]
Clearly $g\neq g'$.

It follows that $(g,g')$ is a normal pair. From the second hypothesis,
we know that there exist $g''\in G$ and $\tau''\in\T^{n}$ such that
$\tau''g''$ is $\S$-irreducible and $\S(\tau''g'')=\S(\Spol(g,g'))$.
Write $\hat{\tau}\Spol(g,g')=\gamma\tau g-\gamma'\tau'g'$, for some
$\gamma,\gamma'\in k^{*}$, where $\hat{\tau}$ is the gcd of $\tau$
and $\tau'$. Since $\left(g,g'\right)$ is a normal pair and $\sigma\in\NS\left(\syz\F\right)$,
\[
\sigma=\tau\S\left(g\right)=\widehat{\tau}\S\left(\Spol\left(g,g'\right)\right)=\widehat{\tau}\S\left(\tau''g''\right)=\S\left(\widehat{\tau}\tau''g''\right).
\]
By Remark~\ref{rem: f S-irred iff S(f)=00003Dphi(LT(f))}, $\S\left(\tau''g''\right)=\varphi\left(\LT\left(\tau''g''\right)\right)$,
so we have $\LT(\tau''g'')=\varphi^{-1}\left(\S\left(\tau''g''\right)\right)\leq\LT(\Spol(g,g'))$.
Multiplying both sides by $\hat{\tau}$, we have 
\[
\LT(\hat{\tau}\tau''g'')\leq\LT(\gamma\tau g-\gamma'\tau'g')<\LT(\tau g).
\]
The existence of $g''$ and $\hat{\tau}\tau''$ contradicts the choice
of $g$ and $\tau$.
\end{proof}

\section{\label{sec:The-algorithm}The algorithm}

We shall now present a simple algorithm which computes as $\S$-Gröbner
basis of an ideal based on the criterion. This algorithm is quite
different from Faugère's, in that it is a direct application of the
criterion. In particular, it does not involve reductions that yield
more then one result, nor the more rigorous simplification rules.
See Section~\ref{sub:Comparison-with-F5} for a detailed discussion.

One first problem is that to check condition \ref{def_normpair_2}
of definition \ref{def_normpair} we need to know $\LT(\syz\F)$,
since 
\[
\S(tf)=t\S(f)\iff t\S(f)\not\in\LT(\syz\F).
\]
We almost never know this before hand; therefore, we introduce a new
variable $L$, a subset of $\LT(\syz\F)$. At the beginning of the
algorithm, we simply assume $L=\varnothing$. We make use of $L$
whenever we need to check if $t\S(f)=\S(tf)$ by checking whether
$t\S(f)$ belongs to $\langle L\rangle\subseteq P^{m}$, the $P$-module
generated by $L$. We then replace condition \ref{def_normpair_2}
of definition \ref{def_normpair} by: 
\[
\S(u_{i}g_{i})=\LT(u_{i})\S(g_{i})\iff\LT(u_{i})\S(g_{i})\not\in\langle L\rangle.
\]
By doing so, we end up considering more pairs than we should, but
we do not skip any legitimate pair.

So, when $(g_{1},g_{2})$ is a normal pair (with the weakened condition
\ref{def_normpair_2}), we calculate a polynomial $f=\Spol(g_{1},g_{2})$
and a $\sigma=\max(u_{1}g_{1},u_{2}g_{2})$. Thereafter we $\S$-reduce
$f$ with respect to $\sigma$. Note that $\sigma$ satisfies the
hypothesis of Theorem \ref{Sred_thm}. If the $\S$-reduction yields
$0$, we know that $\sigma\in\LT(\syz\F)$; accordingly, we enlarge
$L$ by inserting $\sigma$. Otherwise, we obtain a nonzero polynomial,
which tells us that $\sigma=\S(f)$. 

$G$ is the set which will contain the $\S$-Gröbner basis; we add
elements to $G$ as we find them. For each element $g$ we add to
$G$, we also store $\S(g)$; thus, $G$ is more precisely a set of
pairs $(g,\sigma)$. When an $\S$-reduction returns a nonzero polynomial
$f$, we insert $f$ into $G$. Initially, $G=\emptyset$, rather
than a set containing $\{f_{i}\}$, since we do not know if $f_{i}$
is $\S$-irreducible.

$B$ is the set of pairs of the form $(f,\sigma)$, where $f$ is
a polynomial that we $\S$-reduce with respect to $\sigma$. Initially,
we know that $\S(f_{i})=e_{i}$; therefore, we initialize $B$ as
$\{(f_{1},e_{1}),\ldots,(f_{m},e_{m})\}$.

The idea of the algorithm is to build an $\S$-Gröbner basis by finding
its elements in ascending value of $\S$; that is, always to choose
$\left(f,\sigma\right)\in B$ such that $\sigma$ is minimal. (See
step \ref{algostep_minB}.)

\begin{rem}
\label{rem: rewritten_in_algorithm}In virtue of remark~\ref{rem_chooseS},
for each $\sigma$ we can keep in $B$ at most one polynomial $f$
such that $\S\left(f\right)=\sigma$. For the same reason we can,
at any time, remove $\left(f,\sigma\right)$ from $B$ if we can find
another polynomial $f'$ such that $\S\left(f'\right)=\sigma$ and
$\LT\left(f'\right)<\LT\left(f\right)$.

In practice we will remove from $B$ a pair $\left(f,\sigma\right)$
if we can find a $t\in\T^{n}$ and a $\left(f',\sigma'\right)\in G\cup B$
such that $t\sigma'=\sigma$ and $t\LT\left(f'\right)<\LT\left(f\right)$.
\end{rem}
The pseudo code of the algorithm is the following:
\begin{lyxalgorithm}
\label{algo_SGb} \mbox{} 
\begin{description}
\item [{Input:}] $\F=\left(f_{1},\ldots,f_{m}\right)$: an element of $P^{m}$,

$\mu$: an ordering on $\T^{n}$,

$\mu'$: an ordering on $\Tnm$.

\item [{Output:}] $G$: an $\S$-Gröbner basis of $I=\left(f_{1},\ldots,f_{m}\right)$. \end{description}
\begin{enumerate}
\item $L:=\varnothing$ 
\item $G:=\varnothing$ 
\item $B:=\left\{ \left(f_{1},e_{1}\right),\ldots,\left(f_{m},e_{m}\right)\right\} $ 
\item \label{algostep_loopB}While $B\neq\varnothing$ 

\begin{enumerate}
\item \label{algostep_refineB}$B:=\left\{ \left(f,\sigma\right)\in B\mid\sigma\not\in\langle L\rangle\right\} $
\item \label{algostep_refineB_rewritten}Remove from $B$ any $\left(f,\sigma\right)$
such that we can find $\left(f',\sigma'\right)\in G\cup B$, $t\in\T^{n}$
satisfying $t\sigma'=\sigma$ and $\LT\left(tf'\right)<\LT\left(f\right)$
\item \label{algostep_minB}Pick $\left(f,\sigma\right)\in B$ with minimal
$\sigma$.
\item \label{algostep_reduce}$f:=$ $\S$-reduce$\left(f,\sigma,G\right)$
\item If $f\neq0$ then

\begin{enumerate}
\item \label{algostep_updatepairs}$B:=$ UpdatePairs$\left(L,G,B,\left(f,\sigma\right)\right)$
\item \label{algostep_updateG}$G:=G\cup\left\{ \left(f,\sigma\right)\right\} $
\end{enumerate}
\item Else

\begin{enumerate}
\item \label{algostep_LuS}$L:=L\cup\left\{ \sigma\right\} $
\end{enumerate}
\end{enumerate}
\item Return $\left\{ g:\left(g,\sigma\right)\in G\right\} $
\end{enumerate}
\end{lyxalgorithm}
Note that, since $L$ may change during each iteration, some pairs
we assumed to be normal turn out not to be normal. We remove those
in step \ref{algostep_refineB}.

In step \ref{algostep_refineB_rewritten} we implement the idea presented
in Remark \ref{rem: rewritten_in_algorithm}. Note that this is an
optimization; the algorithm will successfully terminate without this
line.

We still have to describe the two procedures Algorithm \ref{algo_SGb}
invokes. The first is \emph{$\S$-reduce}:
\begin{lyxalgorithm}
[$\S$-reduce]\label{algo_Sred}~
\begin{description}
\item [{Input:}] $f$: an element of $I$,

$\sigma$: an element of $\Tnm$,

$G$: a set that contains the elements $\left(g,\S\left(g\right)\right)$
of an $\S$-Gröbner basis with $\S(g)<\sigma$. 

\item [{Output:}] $f$: an $\S$-irreducible polynomial with respect to
$\sigma$.\end{description}
\begin{enumerate}
\item $f:=f/\LC\left(f\right)$
\item While $\exists\left(g,\S\left(g\right)\right)\in G,t\in\T^{n}$ such
that $t\LT\left(g\right)=\LT\left(f\right)$ and $t\S\left(g\right)<\sigma$ 

\begin{enumerate}
\item $f:=f-tg/\LC\left(g\right))$
\item If $f=0$ then Return $0$ 
\item $f:=f/\LC\left(f\right)$
\end{enumerate}
\item Return $f$
\end{enumerate}
\end{lyxalgorithm}
This algorithm takes as input a polynomial $f$ and a $\sigma\in\Tnm$
and, as long as there is an $\S$-reductor for $f$ in $G$, performs
$\S$-reduction steps. Because of the hypothesis on $G$ we know we
obtain an $\S$-irreducible polynomial with respect to $\sigma$.

The second is \emph{UpdatePairs}:
\begin{lyxalgorithm}
[UpdatePairs]\label{algo_UpdatePairs}~
\begin{description}
\item [{Input:}] $L$: a subset of $\LT\left(\syz\F\right)$,

$G$: a set that contains the elements $\left(g,\S\left(g\right)\right)$
of a $\S$-Gröbner basis with $\S\left(g\right)<\sigma$,

$B$: a set that contains elements $\left(g,\sigma_{g}\right)$ of
polynomials that have yet to be considered,

$\left(f,\S\left(F\right)\right)$: where $f\in I\backslash\left\{ 0\right\} $.

\item [{Output:}] $B'$: a set of pairs $\left(f',\sigma\right)$ that
satisfy Theorem \ref{Sred_thm}, produced by the criterion.\end{description}
\begin{enumerate}
\item $B':=\varnothing$
\item For each $\left(g,\S\left(g\right)\right)\in G$, if $\left(f,g\right)$
is a normal pair

\begin{enumerate}
\item Compute $u_{1}$, $u_{2}$ such that $\Spol\left(f,g\right)=u_{1}f+u_{2}g$
\item $\sigma:=\max\left(u_{1}\S\left(f\right),u_{2}\S\left(g\right)\right)$
\item \label{algostep_updateBprime}$B':=B'\cup\{(\Spol(f,g),\sigma)\}$
\end{enumerate}
\item Return $B'\cup B$
\end{enumerate}
\end{lyxalgorithm}
\begin{prop}
Algorithm \ref{algo_SGb} terminates. \end{prop}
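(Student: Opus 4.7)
The plan is to argue by contradiction via two Noetherianity arguments on ascending chains of $P$-submodules. Suppose the main loop at step~\ref{algostep_loopB} never terminates. Then each iteration removes one pair from $B$ and either (a) enlarges $L$ by one element, or (b) enlarges $G$ by one element (and, via UpdatePairs, adds at most $|G|$ new pairs to $B$). If both (a) and (b) occur only finitely often, then from some point on every iteration strictly decreases $|B|$ with no further additions, forcing $B=\varnothing$ and contradicting the assumption.

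For (a): whenever the algorithm adds $\sigma$ to $L$ in step~\ref{algostep_LuS}, the pair $(f,\sigma)$ had survived the filter in step~\ref{algostep_refineB}, so $\sigma\notin\langle L\rangle$ before the addition. Hence each such iteration strictly enlarges the $P$-submodule $\langle L\rangle\subseteq P^{m}$. Since $P^{m}$ is Noetherian (Hilbert basis theorem), only finitely many strict enlargements are possible. So only finitely many iterations fall under case (a).

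For (b), I would consider the $P$-submodule
\[
M_{G}\;=\;\bigl\langle\vartheta(g)=(\LT(g),\S(g))\,:\,(g,\S(g))\in G\bigr\rangle\;\subseteq\;P\oplus P^{m}\cong P^{m+1}.
\]
This too is a Noetherian module, so the ascending chain obtained as elements are added to $G$ must stabilize. The main obstacle is to show that every addition to $G$ in step~\ref{algostep_updateG} strictly enlarges $M_{G}$, which would force case (b) to occur finitely often. To that end, suppose $(f,\sigma)$ is about to be added and $\vartheta(f)\in M_{G}$. Looking at the second component of any representation $(\LT(f),\sigma)=\sum p_{i}\vartheta(g_{i})$, and using that $\sigma$ is a single module monomial in $\NS(\syz\F)$, one extracts a term $t\in\T^{n}$ and an index $i$ with $t\S(g_{i})=\sigma$; by Corollary~\ref{cor: tS(f) =00003D S(tf) if tS(f) =00003D S(g)} this gives $\S(tg_{i})=\sigma$. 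Combining this with the leading-term analysis of the first component and the fact that $f$ is $\S$-irreducible with $\S(f)=\varphi(\LT(f))$ (Remark~\ref{rem: f S-irred iff S(f)=00003Dphi(LT(f))}), one shows that $\LT(tg_{i})\le\LT(f)$, so the pair $(h,\sigma)$ from which $f$ was produced by $\S$-reduction would have been eliminated by the rewriting filter in step~\ref{algostep_refineB_rewritten} (or, equivalently, by the observation in Remark~\ref{rem: rewritten_in_algorithm}). This contradicts the fact that $(h,\sigma)$ was picked in step~\ref{algostep_minB}.

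The hard part is precisely this module-theoretic extraction step: passing from the abstract membership $\vartheta(f)\in M_{G}$ to the existence of a single generator $\vartheta(g_{i})$ and a monomial $t$ realizing $(\LT(f),\sigma)=t\vartheta(g_{i})$. I expect to handle it by choosing an admissible term order on $P^{m+1}$ in which the signature component dominates, so that $\LT(\vartheta(g))=\S(g)$; then Noetherianity of the leading-term submodule reduces the question to monomial divisibility of signatures, where the argument above applies cleanly. Once case (b) is shown to be finite, combining with the finiteness of (a) and the bounded growth of $B$ per iteration finishes the proof.
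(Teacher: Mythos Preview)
Your argument for case~(a) is correct and matches the paper.

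The plan for case~(b) has a genuine gap. The claim that every addition to $G$ strictly enlarges $M_{G}$ is false, and the contradiction you propose does not go through. Suppose $g\in G$ with $\vartheta(g)=(\tau,\sigma_{0})$, and let $t\in\T^{n}\setminus\{1\}$ be such that $t\sigma_{0}\in\NS(\syz\F)$ and $tg$ is itself $\S$-irreducible, i.e.\ $\varphi(t\tau)=t\sigma_{0}$. If the algorithm later picks a pair $(h,t\sigma_{0})\in B$ with $\LT(h)=t\tau$ already minimal, then $\S$-reduction returns $f$ with $\vartheta(f)=(t\tau,t\sigma_{0})=t\,\vartheta(g)\in M_{G}$, so $M_{G}$ is not enlarged. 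The rewriting filter in step~\ref{algostep_refineB_rewritten} does not catch this: it requires $\LT(tg)<\LT(h)$ \emph{strictly}, whereas here $\LT(tg)=t\tau=\LT(h)$. More generally, the inequality you assert runs the wrong way: from $t\S(g_{i})=\sigma=\S(f)=\varphi(\LT(f))$ and Lemma~\ref{lemma_varphi_bij} one obtains $\LT(f)=\varphi^{-1}(\sigma)\le\LT(tg_{i})$, not $\ge$. Thus neither the strict growth of $M_{G}$ nor the filter contradiction can be established.

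The paper's proof sidesteps this by invoking condition~\eqref{def_normpair_1} of Definition~\ref{def_normpair}: a normal pair requires both components to be \emph{primitive} $\S$-irreducible. Hence whenever a non-primitive $f$ is added to $G$, Algorithm~\ref{algo_UpdatePairs} adds nothing to $B$. Only primitive additions can enlarge $B$, and there are finitely many of those, essentially by the Noetherianity of $P\oplus P^{m}$ as in the proof of Proposition~\ref{prop_finiteSGb}. Since $B$ can grow only finitely many times while each iteration removes at least the picked pair, the loop halts. Your module $M_{G}$ is the right object, but it must be tied to the pair-generation mechanism via primitivity rather than to the raw growth of~$G$.
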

\begin{proof}
First we show that step \ref{algostep_LuS} is executed only a finite
number of times.

Because of step \ref{algostep_refineB}, at a given time, we only
consider $\sigma$ that do not belong to $L$; so when we execute
step \ref{algostep_LuS} we really enlarge the $P$-module generated
by $L$. Since $P^{m}$ is noetherian this can happen only a finite
number of times.

Also, that step \ref{algostep_updatepairs} is executed only a finite
number of times. First note that if $f$ is not primitive $\S$-irreducible
(that is, $f$ is only $\S$-irreducible), then Algorithm~\ref{algo_UpdatePairs}
does nothing, so no new polynomials are generated. In the proof of
Proposition~\ref{prop_finiteSGb}, we see that an $\S$-Gröbner basis
contains only a finite number of primitive $\S$-irreducible polynomials.
This completes the proof.\end{proof}
\begin{thm}
Algorithm \ref{algo_SGb} computes an $\S$-Gröbner basis of $I$. \end{thm}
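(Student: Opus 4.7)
The plan is to apply Theorem~\ref{F5crit} to the final $G$ (projected to its polynomial component); termination, already established, ensures $G$ is a finite set of $\S$-irreducible polynomials, since every element was produced by Algorithm~\ref{algo_Sred}. Two preliminary observations organize the argument: each $\sigma$ appended to $L$ in step~\ref{algostep_LuS} corresponds to an $\S$-reduction to zero, so by Theorem~\ref{Sred_thm} it lies in $\LT(\syz\F)$; hence $\langle L\rangle\subseteq\LT(\syz\F)$ throughout, and consequently every genuine normal pair safely passes the $\langle L\rangle$-test used in UpdatePairs to detect non-normality.

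To verify the first hypothesis of the F5 criterion, fix $i$ with $e_i\not\in\LT(\syz\F)$. The initial pair $(f_i,e_i)\in B$ cannot be discarded by step~\ref{algostep_refineB}, since $e_i\not\in\langle L\rangle$ at any time. Step~\ref{algostep_refineB_rewritten} could in principle replace it by some $(f',e_i)$ with strictly smaller leading term, but any such chain terminates by the well-ordering of $\T^n$, so some representative of $\S$-value $e_i$ eventually reaches step~\ref{algostep_reduce}. Its $\S$-reduction is nonzero by Theorem~\ref{Sred_thm} (because $e_i\in\NS(\syz\F)$), and step~\ref{algostep_updateG} inserts an $\S$-irreducible polynomial of $\S$-value $e_i$ into $G$.

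For the second hypothesis, fix a normal pair $(g_1,g_2)\in G\times G$ and set $\sigma=\S(\Spol(g_1,g_2))$. When the later of $g_1,g_2$ was added to $G$, UpdatePairs was invoked and, since $\LT(u_i)\S(g_i)\not\in\LT(\syz\F)\supseteq\langle L\rangle$ by normality, added $(\Spol(g_1,g_2),\sigma)$ to $B$. Because $\sigma\in\NS(\syz\F)$, step~\ref{algostep_refineB} never discards this pair. I proceed by strong induction on $\sigma$, well-ordered under $\mu'$, showing that some $g\in G$ and $t\in\T^n$ exist with $tg$ $\S$-irreducible and $\S(tg)=\sigma$.

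The main obstacle is the rewriting step~\ref{algostep_refineB_rewritten}. If the pair survives to step~\ref{algostep_reduce}, its $\S$-reduction yields a nonzero $\S$-irreducible polynomial of $\S$-value $\sigma$ that step~\ref{algostep_updateG} inserts into $G$, supplying $g$ with $t=1$. Otherwise step~\ref{algostep_refineB_rewritten} supplies $(f',\sigma')\in G\cup B$ and $t\in\T^n$ with $t\sigma'=\sigma$ and $\LT(tf')<\LT(\Spol(g_1,g_2))$. The sub-case $\sigma'=\sigma$ merely substitutes a smaller-leading-term representative; a descending-chain argument again terminates at one that is processed into $G$. The sub-case $\sigma'<\sigma$ invokes the inductive hypothesis at $\sigma'$ (which lies in $\NS(\syz\F)$ because $\sigma$ does and $\LT(\syz\F)$ is a monomial submodule) to produce $g^{\ast}\in G$ and $t^{\ast}$ with $t^{\ast}g^{\ast}$ $\S$-irreducible and $\S(t^{\ast}g^{\ast})=\sigma'$; then Lemma~\ref{lemma_prop_S}(\ref{lemma_prop_S3}) gives $\S(tt^{\ast}g^{\ast})=\sigma$, and if $tt^{\ast}g^{\ast}$ fails to be $\S$-irreducible, Remark~\ref{rem: f S-irred iff S(f)=00003Dphi(LT(f))} produces a witness of strictly smaller $\S$-value and the induction closes via the strategy already used in the proof of Theorem~\ref{F5crit}.
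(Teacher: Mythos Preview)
Your overall approach matches the paper's: both invoke Theorem~\ref{F5crit} and check its two hypotheses. The paper's proof is deliberately terse---essentially one sentence per hypothesis, deferring the justification of step~\ref{algostep_refineB_rewritten} to Remark~\ref{rem: rewritten_in_algorithm}---while you attempt to handle that rewriting step explicitly via strong induction on $\sigma$. That added care is commendable, but the final step of your induction does not close.

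The gap is in the sub-case $\sigma'<\sigma$. You invoke the inductive hypothesis at $\sigma'$, but your induction is over signatures arising from normal pairs in $G$ (that is the statement you are proving), whereas $\sigma'$ is merely the second component of some $(f',\sigma')\in G\cup B$ and need not be such a signature. Even granting $g^\ast,t^\ast$ with $t^\ast g^\ast$ $\S$-irreducible at $\sigma'$, you then face the possibility that $tt^\ast g^\ast$ is not $\S$-irreducible at $\sigma$, and your appeal to ``the strategy already used in the proof of Theorem~\ref{F5crit}'' is circular: that strategy consumes the second hypothesis of the criterion at signature $\S(\Spol(g^\ast,g'))$, which can equal $\sigma$ (when $\widehat\tau=1$), and that is precisely the instance you are trying to establish. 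The proof of Theorem~\ref{F5crit} resolves this not by induction on $\sigma$ but by minimizing $\LT(\tau g)$ among all $(g,\tau)$ with $\tau\S(g)=\sigma$; to make your argument work you would need a secondary descent on leading terms at the fixed signature $\sigma$, tracking the chain of rewritings and normal-pair formations until you reach a pair that is actually processed in step~\ref{algostep_reduce}. Without that, the induction as written does not terminate.
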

\begin{proof}
This is a direct consequence of criterion of Theorem~\ref{F5crit}:
Previous remarks have shown that $G$ contains only $\S$-irreducible
polynomials, and the initial value of $B$ ensures that the algorithm
satisfies the first condition. For the second condition, for each
normal pair $(g_{1},g_{2})$, we ensure that we have a polynomial
$f$ and a monomial $t$ such that $\S(tf)=\S(\Spol(g_{1},g_{2}))$.
\end{proof}
The fact that non-primitive $\S$-irreducible polynomials do not generate
any new pairs plays a \emph{central} role in this proof of termination.
Without it, the thesis does not hold: if we drop condition~\eqref{def_normpair_1}
of Definition~\ref{def_normpair}, it is possible that the algorithm
could enter an infinite loop, computing an infinite number of polynomials
of the form $t_{i}f$ where $\{t_{i}\}$ is an infinite set of terms
and $f$ is an $\S$-irreducible polynomial and each of the $t_{i}f$
is $\S$-irreducible itself. (This occurs, for example, in the implementation
of~\cite{cryptoeprint:2006:404}.)

\section{\label{sec:Comparison-with-previous}Comparison with previous work}

In this section, we consider how this algorithm is both similar and
different to two algorithms in past work: the staggered linear basis
algorithm of~\cite{StaggeredLinearBases} and the F5 algorithm of~\cite{f5}.
(Another discussion of the relationship between F5 and the staggered
linear basis algorithm can be found in~\cite{MoraPosso}.) We also
illustrate explicit differences on three particular examples.

\subsection{\label{sub:Comparison-with-SLB}Comparison with Staggered Linear
Bases}

The Staggered Linear Basis algorithm (in the rest of this section,
SLB)~\cite{StaggeredLinearBases,MoraPosso} introduced a special
kind of Gr\"obner basis.
\begin{defn}
The set $B\subset I$ is a \emph{staggered linear basis of the ideal
$I$ if for all $f\in P$}
\begin{itemize}
\item if $f,g\in B$ and $\LT\left(f\right)=\LT\left(g\right)$, then $f=g$;
and
\item if $t\in\T$ and $tf\in B$, then $f\in B$.
\end{itemize}
\end{defn}
A full review of SLB is beyond the scope of this paper, but it is
worth comparing to the present algorithm because both use trivial
syzygies to detect zero reductions. To facilitate the explanation,
we temporarily adopt the notation $t_{i}=\LT\left(f_{i}\right)$ and
$t_{i,j}=\lcm\left(t_{i},t_{j}\right)$.

SLB tracks monomial ideals for each polynomial among the generators.
Initially, we have
\[
Z_{i}=\left(t_{1},t_{2},\ldots,t_{i-1}\right).
\]
Critical pairs $\left(f_{i},f_{j}\right)$ (with $i<j$) are rejected
whenever $t_{ij}/t_{j}\in Z_{i}$. If instead the $S$-poly\-nomial
of $\left(f_{i},f_{j}\right)$ is computed, then $Z_{j}$ is expanded
by adding the ideal generated by $t_{ij}/t_{j}$. If reduction of
the $S$-poly\-nomial results in a new polynomial $f_{k}$ being
added to the basis, SLB also creates a new ideal
\[
Z_{k}=\left(Z_{j}+\left(t_{i}\right)\right):\left(t_{ij}/t_{j}\right)+\left(t_{1},\ldots,t_{k-1}\right).
\]

Despite the use of principal syzygies in the initial definition of
$Z_{i}$, a fundamental difference between the algorithms lies in
the fact that SLB does not compute, let alone consider, the leading
module term $\S\left(f\right)$ of any polynomials. So a polynomial
can be $\S$-irreducible even if it is top-reducible, and the normal
pairs of the F5 Criterion are not the same as the critical pairs of
SLB. As a result, the approach in SLB behaves quite differently, and
fails to detect certain zero reductions detected by F5 and the present
algorithm.

\subsection{\label{sub:Comparison-with-F5}Comparison with F5}

At first glance, algorithm \ref{algo_SGb} may appear very different
from the F5 algorithm. However, if we define $\mu'$ to be 
\[
te_{i}<_{\mu'}se_{j}\iff\begin{cases}
i<j\quad\text{or}\\
i=j\textrm{ and }t<s
\end{cases}
\]
 for any $t,s\in\T^{n}$ and $1\leq i,j\leq m$, there is an interesting
relationship between $\S$-Gröbner bases and $\LT(\syz\F)$. Define,
for $1\leq l\leq m$, $\pi_{l}:P^{m}\to P$ as the projection on the
$l$-th component, then 
\begin{equation}
\pi_{l}(\psyz\F)=(f_{1},\ldots,f_{l-1})\label{eq_psyzLT}
\end{equation}
 where $\psyz(\F)$ is the $P$-module of principal syzygies, defined
as $\psyz(\F)=\left\langle f_{i}e_{j}-f_{j}e_{i}\right\rangle _{P}\subseteq P^{m}$
; $\psyz(\F)$ is clearly a $P$-submodule of $\syz(\F)$.

Suppose we have $f\in I\setminus\{0\}$ and we know $\S(f)=te_{i}$,
for some $t\in\T^{n}$. It follows from the definition of $\S$ that
$f\in(f_{1},\ldots,f_{i})$. Hence, \eqref{eq_psyzLT} implies that
\[
\LT(f)e_{i+j}\in\LT(\psyz(\F))\quad\textrm{for some }j\geq1.
\]
With this choice for $\mu'$, we can improve the performance of Algorithm
\ref{algo_SGb} by adding an instruction right after step \ref{algostep_updateG}:
\[
L:=L\cup\{\LT(f)e_{i+1},\LT(f)e_{i+2},\ldots,\LT(f)e_{m}\},
\]
where $\sigma=te_{i}$ for some $t\in\T^{n}$. In other words, whenever
we find a new element of $G$, we also find new elements of $L$.

Also, due to the ordering on $P^{m}$, the structure of an $\S$-Gröbner
basis $G$ is very special. We find the elements of $G$ in ascending
value of $\S$: we first find all the elements $g$ such that $\S(g)=te_{1}$
for some $t\in\T^{n}$, then those $g$ such that $\S(g)=te_{2}$
for some $t\in\T^{n}$ and so on. Is easy to see that the real value
of $f_{l}$ is never considered in any computation, until the algorithm
has finished producing all the elements of $G$ with $\S(g)=te_{i}$
for some $t\in\T^{n}$ and $i<l$. If we make the further assumption
that $\syz(\F)=\psyz(\F)$, we conclude that the algorithm never reduces
a polynomial to $0$, since we discover every leading term of the
syzygies in advance.

Therefore, we may say that, in this case, Algorithm \ref{algo_SGb}
is \emph{incremental}, as it first produces an $\S$-Gröbner basis
of $(f_{1})$, then an $\S$-Gröbner basis of $(f_{1},f_{2})$ and
so on, and avoids all the reductions to zero; this behavior is the
same as Faugère's F5 algorithm.

We can couch the use of ``simplification rules'' in F5~\cite[sect. 6]{f5},
also called the \emph{rewritable criterion}, in vocabulary similar
to that used in this paper: F5's algorithm to compute $S$-poly\-nomials
(SPol) discards any $(te_{i},f)$ when
\begin{itemize}
\item there exists some other $\left(t'e_{i},f'\right)\in G\cup B\cup B'$
such that $t\mid t'$, and
\item $f'$ was computed \emph{before} $f$.
\end{itemize}
This concept is related to Remark~\ref{rem_chooseS} in this paper;
roughly we know we can ``decide'' how to obtain an $\S$-irreducible
polynomial with a given signature. We prefer to start with a polynomial
with the smallest leading term we know of, while in F5 just the first
generated polynomial is kept.

This parallel carries over to the computation of $L$, which here
is used to prevent the computation of any $te_{i}\in\LT\left(\syz\F\right)$
more than once. When a polynomial is reduced to zero in F5, the simplification
rule is added even though the polynomial is discarded, and this rule
ensures that any polynomial $f$ with $\S\left(f\right)=t'e_{i}$,
where $t\mid t'$, is not computed. In other words, F5 has an implicit
provision for avoiding the computation of non-trivial syzygies, like
the algorithm here.

\subsection{\label{sub:Concrete-examples}Concrete examples}

We examine how all three examples perform on three ``standard''
systems:
\begin{itemize}
\item the system ``MMT92'', $F=\left\{ yz^{3}-x^{2}t^{2},xz^{2}-y^{2}t,x^{2}y-z^{2}t\right\} $
from \cite{f5} (this seems first to appear in non-homogenized form
in~\cite{143343});
\item the homogenized Cyclic-5 system; and
\item the homogenized Katsura-5 system.
\end{itemize}
We consider
\begin{enumerate}
\item the number of zero reductions; and
\item the size of the Gr\"obner basis generated.
\end{enumerate}
The tests were carried out in unoptimized implementations of each
algorithm in Sage~\cite{sage,SAGEImplementationOfF5,SAGEImplementationOfF5Arri,SAGEImplementationOfSLB},
and are available online.

The results are in Tables~\ref{tab: num zero reds} and~\ref{tab: size of basis}.
\begin{table}

\begin{centering}
\begin{tabular}{|c|c|c|c|c|}
\multicolumn{1}{c}{} & \multicolumn{4}{c}{\textbf{Number of zero reductions}}\tabularnewline
\cline{2-5} 
\multicolumn{1}{c|}{\textbf{Algorithm}} & MMT92 & Cyclic-5 & Cyclic-6 & Katsura-5\tabularnewline
\hline 
Staggered Linear Basis & 3 & 46 & 446 & 10\tabularnewline
\hline 
F5 & 0 & 0 & 16 & 0\tabularnewline
\hline 
Algorithm~\ref{algo_SGb} & 0 & 0 & 8 & 0\tabularnewline
\hline 
\end{tabular}
\par\end{centering}

\protect\caption{\label{tab: num zero reds}Number of zero reductions during execution
of algorithms SLB, F5, and Algorithm~\ref{algo_SGb}.}

\end{table}
\begin{table}
\begin{centering}
\begin{tabular}{|c|c|c|c|c|}
\multicolumn{1}{c}{} & \multicolumn{4}{c}{\textbf{Size of basis}}\tabularnewline
\cline{2-5} 
\multicolumn{1}{c|}{\textbf{Algorithm (size of red. GB)}} & MMT92 & Cyclic-5 & Cyclic-6 & Katsura-5\tabularnewline
\hline 
Staggered Linear Basis & 8 & 38 & 99 & 22\tabularnewline
\hline 
F5 & 10 & 39 & 202 & 30\tabularnewline
\hline 
Algorithm~\ref{algo_SGb} & 10 & 39 & 155 & 30\tabularnewline
\hline 
\end{tabular}
\par\end{centering}

\protect\caption{\label{tab: size of basis}Size of the Gr\"obner basis computed by
algorithms SLB, F5, and Algorithm~\ref{algo_SGb}.}
\end{table}
 Table~\ref{tab: num zero reds} shows that the Staggered Linear
Basis algorithm computes some zero reductions \emph{even though} the
systems are regular sequences. Neither F5 nor Algorithm~\ref{algo_SGb}
computes any zero reductions except in Cyclic-6, which is not a regular
sequence. In that system, Algorithm~\ref{algo_SGb} computes a smaller
basis, and it computes fewer zero reductions. This appears to be due
to the fact that it proceeds by ascending signature (line~\ref{algostep_minB})
rather than by ascending lcm (compare to algorithm Spol in~\cite{f5}).

\section{\label{sec:Conclusions-and-future}Conclusions and future work}

This paper has reformulated the F5 criterion, which in its original
form is due to \cite{f5}, and provided a new proof of this criterion's
correctness. We have introduced the ideas of \emph{$\S$-Gröbner basis}
and \emph{$\S$-irreducible polynomials,} and have shown that if a
set of polynomials $G$ satisfies the F5 criterion, then $G$ is an
$\S$-Gröbner basis and not just a Gröbner basis. In this new setting,
we were able to drop many restrictions present in \cite{f5}: we can
freely choose any ordering on $P^{m}$, and there is no need to for
the sequence $(f_{1},\ldots,f_{m})$ to be regular.

Our statement of the criterion is quite different from the original:
we require that all the polynomials in the set $G$ be $\S$-irreducible;
we require that if $e_{i}\not\in\NS(\syz\F)$, then there exist $g\in G$
such that $\S\left(g\right)=e_{i}$; and we impose a condition on
the \emph{signature} $\S\left(\Spol\left(g_{1},g_{2}\right)\right)$,
rather than the usual condition that
\[
\Spol\left(g_{1},g_{2}\right)=\sum_{i=1}^{\#G}h_{i}g_{i}\quad\mbox{such that}\quad h_{i}\neq0\;\Longrightarrow\LT\left(h_{i}\right)\LT\left(g_{i}\right)\leq\LT\left(\Spol\left(g_{1},g_{2}\right)\right).
\]
(Faug\`ere calls this latter condition $o\left(\Spol\left(g_{1},g_{2}\right)\right)$.)
We also changed the definition of \emph{normal pair} by adding a new
condition: the fact that we can consider only primitive $\S$-irreducible
polynomials.

We then proposed a simple algorithm to show an application of the
new criterion. The algorithm presented here is mainly demonstrative,
and does not include many ``obvious'' optimizations such as holding
off on the computation of a new polynomial $f$ until it is actually
needed in step~\ref{algostep_reduce} of Algorithm~\ref{algo_SGb}.
\begin{acknowledgement*}
The authors would like to thank the referees for helpful and instructive
comments that improved the paper.
\end{acknowledgement*}

\section*{Appendix}

In March 2016 we were informed that the arXiv version of this paper
differed from the published version in a critical point of the proof
of Proposition~14: the arXiv version used the term ``module'' where
the published version used ``monomodule,'' a much less familiar
term. This was an unfortunate error in the arXiv version, and has
now been fixed. See~\cite{CompCommAlgI} for the definition of monomodule
and some examples, including $\T_{m}^{n}$, which in their notation
is $\mathbb{T}^{n}\left\langle e_{1},\ldots,e_{m}\right\rangle $.

In October 2011, Vasily Galkin of Moscow State University contacted
us with a question about Proposition~\ref{prop_finiteSGb}. His question
was sparked by the definition of $\mu'$ as an \emph{admissible} ordering
on $\Tnm$. Apparently, we used the wrong word; our reference for
the notation (which, it amazes us to report now, we did not include
in the bibliography) was \cite{CompCommAlgI}. (As far as we can tell,
it is the only textbook that uses the word ``monomodule''.) This
text \emph{does not define} an ``admissible'' ordering for a module;
it defines either a module ordering (p.~54) or a compatible ordering
(p.~55). It seems that when we wrote ``admissible'', we meant ``compatible''.
Indeed, if the ordering is not compatible, the $\S$-Gr\"obner basis
may be infinite, as the following example shows.
\begin{example}
Let $<$ be the degrevlex ordering with $x>y>z$. Let $<'$ be the
module ordering with
\[
x\sigvar_{1}<'y\sigvar_{1}<'x\sigvar_{2}<'y\sigvar_{2}<'z\sigvar_{1}<'z\sigvar_{2}
\]
extended to all other module terms in the following way: $t\sigvar_{i}<'u\sigvar_{j}$
if
\begin{itemize}
\item $\deg t<\deg u$ (total degree), or
\item $\deg t=\deg u$ and

\begin{itemize}
\item $\deg_{z}t<\deg_{z}u$, or
\item $\deg_{z}t=\deg_{z}u$ and

\begin{itemize}
\item $i=1$ and $j=2$, or
\item $i=j$ and $\deg_{y}t<\deg_{y}u$, or
\item $i=j$, $\deg_{y}t=\deg_{y}u$, and $\deg_{x}t<\deg_{x}u$.
\end{itemize}
\end{itemize}
\end{itemize}

It is routine to verify that this is a module ordering; it is obviously
not compatible with $<$.

Let
\[
f_{1}=x^{2}+xy,\quad f_{2}=xy+z^{2}.
\]
We have
\begin{equation}
\LT\left(\syz\left(\left\{ f_{1},f_{2}\right\} \right)\right)=\left\{ t\cdot\LT\left(f_{2}\sigvar_{1}-f_{1}\sigvar_{2}\right):t\in\T^{n}\right\} =\left\{ t\cdot z^{2}\sigvar_{1}:t\in\T^{n}\right\} .\label{eq: syzygy}
\end{equation}

For $i>3$, let $f_{i}$ be the $S$-polynomial of $f_{1}$ and $f_{i-1}$.
We have
\begin{align*}
f_{3} & =xy^{2}-xz^{2}\\
f_{4} & =xy^{3}+x^{2}z^{2}\\
 & \vdots\\
f_{i} & =xy^{i-1}+\left(-1\right)^{i}x^{i-2}z^{2}.
\end{align*}
For $i\geq2$, $\S\left(f_{i}\right)=x^{i-2}\sigvar_{2}=\min\left\{ t\sigvar_{2}:t\in\T^{n},\deg_{t}=i-2\right\} $.
So $f_{i}$ is not $\S$-reducible by $f_{2}$, \ldots{}, $f_{i-1}$
even though it is top-reducible by them. Additionally, $\LT\left(f_{1}\right)\nmid\LT\left(f_{i}\right)$,
so $f_{1}$ is $\S$-irreducible. 

It remains to see if the $f_{i}$ are primitive. Let $g\in I\backslash\left\{ 0\right\} $
such that $\LT\left(tg\right)=\LT\left(f_{i}\right)$ for some $t\in\T^{n}\backslash\left\{ 1\right\} $.
The factors of $\LT\left(f_{i}\right)$ are $x$ and $y$ \emph{only},
so $t=x^{a}y^{b}$ for some $a,b\in\mathbb{N}$. We also want $\S\left(tg\right)=\S\left(f_{i}\right)$;
we claim that this is possible only if $t=x^{a}$ for some $a\in\mathbb{N}$.
To see why, assume that $\S\left(tg\right)=\S\left(f\right)$. If
$t\S\left(g\right)\neq\S\left(g\right)$, we must have $t\S\left(g\right)\in\LT\left(\syz\left(\left\{ f_{1},f_{2}\right\} \right)\right)$,
which would imply that $t\S\left(g\right)=uz^{2}\sigvar_{1}$ for
some $u\in\T^{n}$. Since $\S\left(g\right)\in\NS\left(\syz\left(\left\{ f_{1},f_{2}\right\} \right)\right)$,
we infer that $2>\deg_{z}\S\left(g\right)=\deg_{z}\left(t\S\left(g\right)\right)$,
a contradiction to $t\S\left(g\right)=uz^{2}\sigvar_{1}$. Thus, $t\S\left(g\right)\in\NS\left(\syz\left(\left\{ f_{1},f_{2}\right\} \right)\right)$,
and $t\S\left(g\right)=\S\left(tg\right)$. So $t\S\left(g\right)=\S\left(tg\right)=\S\left(f_{i}\right)=x^{i-2}\sigvar_{2}$.
As claimed, $t=x^{a}$ for some $a\in\mathbb{N}$. Since $t\neq1$,
$a>0$; since $t\mid\LT\left(f_{i}\right)$, $a<2$. Hence, $t=x$
and $\LT\left(g\right)=y^{i-1}$, but a computation of the Gr\"obner
basis shows that $y^{i-1}\not\in\LT\left(I\right)=\left\langle y^{2}z^{2},xz^{2},x^{2},xy\right\rangle $.
Hence, there do not exist $g\in I\backslash\left\{ 0\right\} $ and
$t\in\T^{n}\backslash\left\{ 1\right\} $ such that $\LT\left(tg\right)=\LT\left(f_{i}\right)$
and $\S\left(tg\right)=\S\left(f_{i}\right)$.

It follows that any $\S$-Gr\"obner of $I$ with respect to $<$
and $<'$ must have all the polynomials $f_{2},f_{3},\ldots$, and
is thus infinite.
\end{example}
\bibliographystyle{plain}
\bibliography{The_F5_criterion_revised_v6}

\begin{thebibliography}{10}

\bibitem{SAGEImplementationOfF5}
Martin Albrecht and John Perry.
\newblock Implementation of {F}aug{\`e}re's {F}5 algorithm.
\newblock Sage library, 2008.

\bibitem{mb}
M.~Bardet.
\newblock {\em {\'E}tude des syst{\`e}mes alg{\'e}briques
  surd{\'e}termin{\'e}s. {A}pplications aux codes correcteurs et {\`a} la
  cryptographie}.
\newblock PhD thesis, LIP6, 2006.

\bibitem{GBBIB706}
B.~Buchberger.
\newblock {\em Ein Algorithmus zum Auffinden der Basiselemente des
  Restklassenringes nach einem nulldimensionalen Polynomideal}.
\newblock PhD thesis, University of Innsbruck, 1965.

\bibitem{Buchberger79}
Bruno Buchberger.
\newblock A criterion for detecting unnecessary reductions in the construction
  of {G}r{\"o}bner bases.
\newblock In E.~W. Ng, editor, {\em Proceedings of the EUROSAM 79 Symposium on
  Symbolic and Algebraic Manipulation, Marseille, June 26-28, 1979}, volume~72
  of {\em Lecture Notes in Computer Science}, pages 3--21, Berlin - Heidelberg
  - New York, 1979. Springer.

\bibitem{f5}
J.~C. Faug{\`e}re.
\newblock A new efficient algorithm for computing {G}r{\"o}bner bases without
  reduction to zero (${F}_5$).
\newblock In {\em ISSAC '02: Proceedings of the 2002 International Symposium on
  Symbolic and Algebraic Computation}, pages 75--83, New York, NY, USA, 2002.
  ACM Press.

\bibitem{Fau99}
Jean-Charles Faug{\`e}re.
\newblock A new efficient algorithm for computing {G}r{\"o}bner bases
  (${F}_4$).
\newblock {\em Journal of Pure and Applied Algebra}, 139(1--3):61--88, June
  1999.

\bibitem{Gash2008}
Justin Gash.
\newblock {\em On Efficient Computation of {G}r{\"o}bner Bases}.
\newblock Ph.{D}.~dissertation, Indiana University, Bloomington, IN, 2008.

\bibitem{StaggeredLinearBases}
Rudiger Gebauer and Hans M{\"o}ller.
\newblock Buchberger's algorithm and staggered linear bases.
\newblock In {\em Proceedings of SYMSAC 1986 (Waterloo/Ontario)}, pages
  218--221. ACM Press, 1986.

\bibitem{GBBIB1064}
Rudiger Gebauer and Hans M{\"o}ller.
\newblock On an installation of {B}uchberger's algorithm.
\newblock {\em Journal of Symbolic Computation}, 6:275--286, 1988.

\bibitem{CompCommAlgI}
Martin Kreuzer and Lorenzo Robbiano.
\newblock {\em Computational Commutative Algebra}, volume~1.
\newblock Springer, 2000.

\bibitem{Lazard83}
Daniel Lazard.
\newblock Gr{\"o}bner bases, {G}aussian elimination, and resolution of systems
  of algebraic equations.
\newblock In J.~A. {van Hulzen}, editor, {\em EUROCAL '83, European Computer
  Algebra Conference}, volume 162, pages 146--156. Springer LNCS, 1983.

\bibitem{143343}
Hans M{\"o}ller, Teo Mora, and Carlo Traverso.
\newblock Gr{\"o}bner bases computation using syzygies.
\newblock In {\em ISSAC '92: Proceedings of the International Symposium on
  Symbolic and Algebraic Computation}, pages 320--328, New York, NY, USA, 1992.
  ACM.

\bibitem{MoraPosso}
Teo Mora.
\newblock {\em Solving Polynomials Systems II: Macaulay's Paradigm and
  {G}r{\"o}bner Technology}.
\newblock Cambridge University Press, 2005.

\bibitem{SAGEImplementationOfSLB}
John Perry.
\newblock Implementation of {S}taggered {L}inear {B}asis algorithm.
\newblock Sage library, 2008.

\bibitem{SAGEImplementationOfF5Arri}
John Perry.
\newblock Implementation of {A}rri's {F}5 variant.
\newblock Sage library, 2010.

\bibitem{cryptoeprint:2006:404}
Till Stegers.
\newblock Faugere's {F}5 algorithm revisited.
\newblock Cryptology ePrint Archive, Report 2006/404, 2006.
\newblock \url{http://eprint.iacr.org/}.

\bibitem{sage}
William Stein.
\newblock {\em {Sage}: {O}pen {S}ource {M}athematical {S}oftware ({V}ersion
  4.1.1)}.
\newblock The Sage~Group, 2010.
\newblock {\tt www.sagemath.org}.

\end{thebibliography}

\end{document}